\documentclass[letterpaper,12pt,reqno]{amsart}
\usepackage{amscd}
\usepackage{indentfirst}
\usepackage{graphics}
\numberwithin{equation}{section}
\usepackage[margin=2.9cm]{geometry}
\usepackage{epstopdf}
\usepackage{adjustbox}
\usepackage{quiver}

\usepackage{amsmath, amsthm, amsfonts, amssymb}
\usepackage{relsize}
\usepackage{bbm}
\usepackage{mathtools}
\usepackage{stmaryrd}
\usepackage{graphicx}
\usepackage{tikz-cd}
\usepackage{enumitem}

\definecolor{cof}{RGB}{219,144,71}
\definecolor{pur}{RGB}{186,146,162}
\definecolor{greeo}{RGB}{91,173,69}
\definecolor{greet}{RGB}{52,111,72}

\usepackage{hyperref}
\hypersetup{
    colorlinks=true,
    linkcolor=violet,
    filecolor=magenta,      
    urlcolor=blue,
    citecolor=red
}
\usetikzlibrary{decorations.markings,positioning}
\definecolor{pur}{RGB}{186,146,162}
\pgfdeclarelayer{background}
\pgfsetlayers{background,main}
\newtheorem{thm}{Theorem}[section]
\newtheorem*{thm*}{Main Theorem}
\newtheorem{prop}[thm]{Proposition}
\newtheorem{lem}[thm]{Lemma}
\newtheorem{cor}[thm]{Corollary}

\theoremstyle{definition}
\newtheorem{defn}[thm]{Definition}
\newtheorem{ex}[thm]{Example}
\newtheorem{rmk}[thm]{Remark}
\newtheorem{para}[thm]{}

\newcommand{\tri}{\ol{\nabla}}

\newcommand{\bV}{\mathbb{V}}
\newcommand{\boxprod}{\hspace{0.2mm}\Box\hspace{0.2mm}}
\newcommand{\lifts}{\boxslash}

\newcommand{\cin}{\subseteq}

\newcommand\ol[1]{\ensuremath{\overline{#1}}}

\newcommand{\nospace}[1]{\makebox[0pt][l]{\,#1}}

\newcommand{\mcT}{\mathcal{T}}

\newcommand{\mcA}{\mathcal{A}}
\newcommand{\mcS}{\mathcal{S}}

\newcommand{\mcJ}{\mathcal{J}}

\newcommand{\mcB}{\mathcal{B}}
\newcommand{\mcN}{\mathcal{N}}
\newcommand{\mcM}{\mathcal{M}}

\newcommand{\op}{\operatorname{op}}
\newcommand{\id}{\operatorname{id}}
\newcommand{\sk}{\operatorname{sk}}

\newcommand{\IsoHorn}{\mathsf{IsoHorn}}
\newcommand{\Bdry}{\mathsf{Bdry}}
\newcommand{\Mono}{\mathsf{Mono}}
\newcommand{\Wide}{\mathsf{Wide}}

\newcommand{\Set}{\mathsf{Set}}
\newcommand{\sSet}{\mathsf{sSet}}

\newcommand{\Cat}{\mathsf{Cat}}

\begin{document}

\title[Fibrant objects in the minimal model structure on simplicial sets]{A horn-like characterization of the fibrant objects in the minimal model structure on simplicial sets}

\author[M. Feller]{Matt Feller}

\address{Department of Mathematics, University of Virginia, Charlottesville, VA 22904}

\email{feller@virginia.edu}

\date{\today}

\thanks{The author was partially supported by NSF RTG grant DMS-1839968 and NSF grant DMS-1906281.}

\maketitle

\begin{abstract}
    We show that the fibrant objects in the minimal model structure on the category of simplicial sets are characterized by a lifting condition with respect to maps which resemble the horn inclusions that define Kan complexes.
\end{abstract}

\setcounter{tocdepth}{1}

\tableofcontents

\section{Introduction}

Model categories play a crucial role in modern homotopy theory, underpinning much of the current work in higher categories. A model category consists of a category with a chosen model structure, which amounts to a choice of ``homotopy theory'' for the given category. More precisely, a model structure is a choice of weak equivalences, cofibrations, and fibrations satisfying certain axioms which abstract the behavior of the analogous classes of maps from the ordinary homotopy theory of topological spaces. (See \cite[Def.~2.2.1]{Cisinski:Cambridge} for an explicit definition of model categories.)

Often times, one can model different homotopy theories with a single category by constructing multiple model structures on that category. A major example is $\sSet$, the category of simplicial sets, which admits the Kan-Quillen model structure (modeling topological spaces) \cite{Quillen} as well as many others such as the Joyal model structure (modeling $(\infty,1)$-categories) \cite{Joyal:theory}. Chapter 6 of \cite{Balchin} contains an overview of most of the known model structures on $\sSet$. A common feature of these model structures is that they are examples of \emph{Cisinski model structures}, meaning that the cofibrations are the monomorphisms and the model structure is cofibrantly generated (the latter being a technical condition that we explain in Subsection \ref{sub:modelstructures}).

In general in a model category, we can consider an object $X$ well-behaved if it is \emph{cofibrant} (the map from the initial object $\varnothing\to X$ is a cofibration) as well as \emph{fibrant} (the map to the terminal object $X\to \ast$ is a fibration), but in a Cisinski model structure every object is cofibrant because $\varnothing\to X$ is a monomorphism, so the focus is on the class of fibrant objects. One can view the fibrant objects as those that ``actually behave like what we are modeling.'' For example, the fibrant objects in the Kan-Quillen model structure are the Kan complexes, which are precisely the simplicial sets which behave like topological spaces in a particular sense. From this perspective, understanding a Cisinski model structure on $\sSet$ amounts, in large part, to understanding the fibrant objects. In fact, it follows from a result of Joyal that a Cisinski model structure is uniquely determined by its class of fibrant objects; see Proposition \ref{prop:MSdeterminedbyfibobj}.

A common way to get a new model structure from an existing one is through \emph{localization}, where we fix the cofibrations and add more weak equivalences, yielding a new model structure with fewer fibrant objects. For example, if we start with the Joyal model structure and localize at some set of maps, then the fibrant objects of the resulting model structure are quasi-categories satisfying some extra property, such as the $n$-truncated quasi-categories in \cite{CL}. However, if we wish to find a model structure with a class of fibrant objects generalizing that of quasi-categories, as is our goal in a related paper \cite{Feller:generalizing}, there is no standard process for going the other direction and ``de-localizing'' the Joyal model structure. Luckily, in \cite{Cisinski:Asterisque} Cisinski provides a powerful framework for producing model structures on presheaf categories, such as $\sSet$. See \cite[Sec.~2.4]{Cisinski:Cambridge} for an English summary of his theory.

One of the consequences of Cisinski's theory is the existence of a \emph{minimal model structure} on $\sSet$, of which every other Cisinski model structure on $\sSet$ is a localization. From his machinery one also gets a description of the fibrant objects in terms of lifts of certain pushout-product maps, as we recall in Section \ref{sec:knownchar}. The goal of this paper is to provide a new characterization of the fibrant objects which is easier to check and more closely resembles the horn-lifting conditions that define Kan complexes and quasi-categories.

To illustrate the idea behind our new characterization, recall that a \emph{face} of an ordinary $n$-simplex is the result of deleting one of the vertices, and an ordinary $n$-horn is the union of all but one face of the $n$-simplex. For example, here is a picture of a 2-horn inclusion:
\[
\adjustbox{scale=1}{
\begin{tikzpicture}[line join = round, line cap = round]
\begin{scope}[decoration={markings,mark=at position .93 with {\arrow[scale=1.2]{>}}}]
\draw[line width=0.25mm,postaction={decorate}] (0,0) -- node[auto] {} (2,0);
\end{scope}
\filldraw[white] (0,0) circle (4pt);
\filldraw[white] (2,0) circle (4pt);
\filldraw[black] (0,0) circle (1.5pt);
\filldraw[black] (2,0) circle (1.5pt);
\filldraw[white] (2,-2) circle (4pt);
\filldraw[black] (2,-2) circle (1.5pt);
\begin{scope}[decoration={markings,mark=at position 1 with {\arrow[scale=1.2]{>}}}]
\draw[line width=0.25mm,postaction={decorate}] (2,-1.85) -- node[auto] {} (2,-0.15);
\end{scope}
\end{tikzpicture}
\hspace{.2in}
$\mathrel{\raisebox{6ex}{$\hookrightarrow$}}$
\hspace{.2in}
\begin{tikzpicture}[line join = round, line cap = round]
\begin{scope}[decoration={markings,mark=at position .93 with {\arrow[scale=1.2]{>}}}]
\draw[line width=0.25mm,postaction={decorate}] (0,0) -- node[auto] {} (2,0);
\end{scope}
\filldraw[white] (0,0) circle (4pt);
\filldraw[white] (2,0) circle (4pt);
\filldraw[black] (0,0) circle (1.5pt);
\filldraw[black] (2,0) circle (1.5pt);
\filldraw[white] (2,-2) circle (4pt);
\filldraw[black] (2,-2) circle (1.5pt);
\path[fill=blue,fill opacity=.1] (0,0)--(2,0)--(2,-2)--cycle;
\begin{scope}[decoration={markings,mark=at position 1 with {\arrow[scale=1.2]{>}}}]
\draw[line width=0.25mm,postaction={decorate}] (2,-1.85) -- node[auto] {} (2,-0.15);
\draw[line width=0.2mm,postaction={decorate}] (.1,-.1) -- node[auto] {} (1.9,-1.9);
\end{scope}
\end{tikzpicture}
}.
\]
Also recall that the 2-simplex is the nerve of the category generated by two arrows
\[
c_0\to c_1 \to c_2 .
\]
For our characterization, we define \emph{isoplexes}, which turn out to be the nerve of a category generated by $n$ arrows, except that one of the arrows $c_i\to c_{i+1}$ is an isomorphism. We can define faces of isoplexes to be the result of deleting vertices, and \emph{iso-horns} to be the union of all but a certain face of an isoplex. For example, here is a picture of an iso-horn inclusion:
\[
\adjustbox{scale=1}{
\begin{tikzpicture}[line join = round, line cap = round]
\begin{scope}[decoration={markings,mark=at position .93 with {\arrow[scale=1.2]{>}}}]
\draw[line width=0.25mm,red,postaction={decorate}] (0,0) -- node[auto] {} (2,0);
\end{scope}
\filldraw[white] (0,0) circle (4pt);
\filldraw[white] (2,0) circle (4pt);
\filldraw[red] (0,0) circle (1.5pt);
\filldraw[magenta] (2,0) circle (1.5pt);
\filldraw[white] (2,-2) circle (4pt);
\filldraw[blue] (2,-2) circle (1.5pt);
\begin{scope}[decoration={markings,mark=at position 1 with {\arrow[scale=1.2]{>}}}]
\draw[line width=0.25mm,blue,postaction={decorate}] (1.9,-0.15) -- node[auto] {} (1.9,-1.85);
\draw[line width=0.25mm,blue,postaction={decorate}] (2.1,-1.85) -- node[auto] {} (2.1,-0.15);
\end{scope}
\end{tikzpicture}
\hspace{.2in}
$\mathrel{\raisebox{6ex}{$\hookrightarrow$}}$
\hspace{.2in}
\begin{tikzpicture}[line join = round, line cap = round]
\begin{scope}[decoration={markings,mark=at position .93 with {\arrow[scale=1.2]{>}}}]
\draw[line width=0.25mm,postaction={decorate}] (0,0) -- node[auto] {} (2,0);
\end{scope}
\filldraw[white] (0,0) circle (4pt);
\filldraw[white] (2,0) circle (4pt);
\filldraw[black] (0,0) circle (1.5pt);
\filldraw[black] (2,0) circle (1.5pt);
\filldraw[white] (2,-2) circle (4pt);
\filldraw[black] (2,-2) circle (1.5pt);
\path[fill=magenta,fill opacity=.1] (0,0)--(2,0)--(2.1,-0.1)--(2.1,-1.9)--(2,-2)--cycle;
\begin{scope}[decoration={markings,mark=at position 1 with {\arrow[scale=1.2]{>}}}]
\draw[line width=0.25mm,postaction={decorate}] (1.9,-0.15) -- node[auto] {} (1.9,-1.85);
\draw[line width=0.25mm,postaction={decorate}] (2.1,-1.85) -- node[auto] {} (2.1,-0.15);
\draw[line width=0.2mm,postaction={decorate}] (.1,-.1) -- node[auto] {} (1.75,-1.75);
\end{scope}
\end{tikzpicture}
}.
\]

\begin{thm*}[Thm.~\ref{thm:isohornminimalfibrant}]
A simplicial set $X$ is fibrant in the minimal model structure if and only if it has lifts with respect to all iso-horn inclusions.
\end{thm*}

One direction of this result follows from the observation that these iso-horn inclusions are retracts of the pushout-product maps from the known characterization of the fibrant objects in the minimal model structure, such as
\[
\begin{tikzpicture}[line join = round, line cap = round]
\begin{scope}[decoration={markings,mark=at position .93 with {\arrow[scale=1]{>}}}]
\draw[line width=0.25mm,red,postaction={decorate}] (0,0) -- node[auto] {} (2,0);
\end{scope}
\filldraw[white] (0,0) circle (4pt);
\filldraw[white] (2,0) circle (4pt);
\filldraw[magenta] (0,0) circle (1.5pt);
\filldraw[magenta] (2,0) circle (1.5pt);
\filldraw[white] (0,-2) circle (4pt);
\filldraw[white] (2,-2) circle (4pt);
\filldraw[blue] (0,-2) circle (1.5pt);
\filldraw[blue] (2,-2) circle (1.5pt);
\begin{scope}[decoration={markings,mark=at position 1 with {\arrow[scale=1]{>}}}]
\draw[line width=0.25mm,blue,postaction={decorate}] (-0.1,-0.15) -- node[auto] {} (-0.1,-1.85);
\draw[line width=0.25mm,blue,postaction={decorate}] (1.9,-0.15) -- node[auto] {} (1.9,-1.85);
\draw[line width=0.25mm,blue,postaction={decorate}] (0.1,-1.85) -- node[auto] {} (0.1,-0.15);
\draw[line width=0.25mm,blue,postaction={decorate}] (2.1,-1.85) -- node[auto] {} (2.1,-0.15);
\end{scope}
\end{tikzpicture}
\hspace{.2in}
\mathrel{\raisebox{6ex}{$\hookrightarrow$}}
\hspace{.2in}
\begin{tikzpicture}[line join = round, line cap = round]
\begin{scope}[decoration={markings,mark=at position .93 with {\arrow[scale=1]{>}}}]
\draw[line width=0.25mm,postaction={decorate}] (0,0) -- node[auto] {} (2,0);
\draw[line width=0.25mm,postaction={decorate}] (0,-2) -- node[auto] {} (2,-2);
\end{scope}
\filldraw[white] (0,0) circle (4pt);
\filldraw[white] (2,0) circle (4pt);
\filldraw[black] (0,0) circle (1.5pt);
\filldraw[black] (2,0) circle (1.5pt);
\filldraw[white] (0,-2) circle (4pt);
\filldraw[white] (2,-2) circle (4pt);
\filldraw[black] (0,-2) circle (1.5pt);
\filldraw[black] (2,-2) circle (1.5pt);
\path[fill=magenta,fill opacity=.1] (-0.1,-0.2)--(0,0)--(2,0)--(2.1,-0.2)--(2.1,-1.8)--(2,-2)--(0,-2)--(-0.1,-1.8)--cycle;
\begin{scope}[decoration={markings,mark=at position 1 with {\arrow[scale=1]{>}}}]
\draw[line width=0.25mm,postaction={decorate}] (-0.1,-0.15) -- node[auto] {} (-0.1,-1.85);
\draw[line width=0.25mm,postaction={decorate}] (1.9,-0.15) -- node[auto] {} (1.9,-1.85);
\draw[line width=0.25mm,postaction={decorate}] (0.1,-1.85) -- node[auto] {} (0.1,-0.15);
\draw[line width=0.25mm,postaction={decorate}] (2.1,-1.85) -- node[auto] {} (2.1,-0.15);
\draw[line width=0.2mm,postaction={decorate}] (.3,-.3) -- node[auto] {} (1.75,-1.75);
\draw[line width=0.2mm,postaction={decorate}] (.2,-1.8) -- node[auto] {} (1.8,-.2);
\end{scope}
\end{tikzpicture},
\]
and hence that the fibrant objects in the minimal model structure must have lifts with respect to iso-horn inclusions. What is not as straightforward is to show that having lifts with respect to iso-horn inclusions is sufficient to imply fibrancy. In order to prove the latter direction, we introduce the concepts of widenings and widened inclusions in Section \ref{sec:wide}, which also ultimately provide us a more conceptual understanding of fibrancy in the minimal model structure.

More specifically, what we show in Section \ref{sec:isohornchar} is that the class of pushout-products of inclusions with the map ${0}\hookrightarrow J$ generate the same saturated class as the set of iso-horn inclusions. This result turns out to be very useful in the proof of one of the main results of \cite{Feller:quasi}, that the appropriate definition of completeness for 2-Segal spaces gives us Quillen equivalences of model structures with our model structure for quasi-2-Segal sets. The key piece of the argument is to show something about the saturated class generated by the pushout-products of inclusions with the map ${0}\hookrightarrow J$; by applying the results from this paper, we reduce the argument to a straightforward lemma about iso-horns.

With a better handle on the minimal model structure comes a deeper understanding more generally of Cisinski model structures on $\sSet$. For one thing, it allows us to conclude that a class of simplicial sets cannot be the class of fibrant objects for some Cisinski model structure if we know that it does not have lifts with respect to iso-horn inclusions. It also shows us what aspects are universal to all Cisinski model structures on simplicial sets, as opposed to what aspects might be unique to a particular model structure such as the Joyal or Kan-Quillen model structures. This idea is illustrated in \cite{Feller:generalizing}, where we identify a nice property of the Joyal model structure which we call ``homotopically-behaved.'' The minimal model structure itself does not turn out to be homotopically-behaved, meaning that there is some subtlety to be addressed when constructing model structures with fibrant objects more general than quasi-categories if we would like the model structure to be homotopically-behaved. This subtlety motivates our central theorem in \cite{Feller:generalizing}, which is that there exists a minimal homotopically-behaved model structure on $\sSet$.

Although our methods as laid out in the present paper only apply to the category of simplicial sets, they may be useful as a blueprint for the study of the minimal model structure on other presheaf categories. We briefly speculate about how one might approach this generalization in Remark \ref{rmk:othercategories}.

\subsection{Outline}

Section \ref{sec:background} covers the necessary background in simplicial sets, model categories, and Cisinski's theory, followed by Section \ref{sec:knownchar}, in which we recall the known characterization of the fibrant objects in the minimal model structure. In Section \ref{sec:wide}, we introduce widenings and widened inclusions and prove some fundamental properties, and then in Section \ref{sec:isohornchar} we define iso-horns and show that the fibrant objects in the minimal model structure are characterized by having lifts with respect to iso-horn inclusions.

\subsection{Acknowledgements}

I would like to thank Julie Bergner and Scott Balchin for their helpful and detailed feedback on early drafts.

\section{Background}\label{sec:background}

In this section we set our terminology and notation relating to simplicial sets and model categories, and then briefly review Cisinski's theory.

\subsection{Simplicial sets}\label{sub:simplicialsets}

The \emph{simplex category $\Delta$} is the category with objects
\[
[n]=\{0\leq 1\leq \ldots\leq n\}\text{ for each }n\geq 0,
\]
whose morphisms are monotone maps. A \emph{simplicial set} is a functor $\Delta^{\op}\to \Set$. We denote the category of simplicial sets by $\sSet$. We denote by $\Delta[n]$ the standard $n$-simplex, which is the simplicial set corresponding to $[n]$ via the Yoneda embedding $\Delta\hookrightarrow \sSet$. See Sections 1.1 and 1.2 of \cite{Cisinski:Cambridge} for more details about $\sSet$.

Given a simplicial set $X\colon \Delta^{\op}\to \Set$, we denote by $X_n$ the set $X([n])$, and call an element in $X_n$ an \emph{$n$-simplex} of $X$. We often refer to a 0-simplex as a \emph{vertex} and a 1-simplex as an \emph{edge}. By the Yoneda Lemma, an $n$-simplex in $X$ is equivalently a map of simplicial sets $\Delta[n]\to X$.

The injective morphisms in $\Delta$ are generated by co-face maps $d^i\colon [n-1]\hookrightarrow [n]$, which give us the \emph{face maps} $d_i\colon X_n\to X_{n-1}$ of a simplicial set $X$ for each $n\geq 1$ and $0\leq i\leq n$. The \emph{boundary} of the standard $n$-simplex $\Delta$ is the union of all of its faces, denoted by $\partial\Delta[n]$. The \emph{boundary of an $n$-simplex} $\sigma\colon \Delta[n]\to X$ is its restriction along the \emph{boundary inclusion} $\partial\Delta[n]\hookrightarrow \Delta[n]$.

Given a simplicial set $X$ and a set of vertices $\nu\cin X$, the \emph{full subcomplex of $X$ on $\nu$} is the simplicial set $Z\cin X$ consisting of the simplices of $X$ whose vertices are all in $\nu$.

The surjective morphisms in $\Delta$ are generated by co-degeneracy maps $s^i\colon [n+1]\to [n]$, which give us the \emph{degeneracy maps} $s_i\colon X_n\to X_{n+1}$ of a simplicial set $X$ for each $n\geq 0$ and $0\leq i\leq n$. An $n$-simplex is \emph{degenerate} if it is in the image of a degeneracy map, and is \emph{non-degenerate} otherwise. Given a simplicial set $X$ and $k\geq 0$, the \emph{$k$-skeleton of $X$} is the simplicial set $\sk_k X\cin X$ generated by the non-degenerate $n$-simplices of $X$ for all $n\leq k$.

\subsection{The nerve functor and the simplicial set $J$}\label{sub:nerve}

Denote by $\Cat$ the category of small categories. The \emph{nerve functor} is a fully faithful embedding $N\colon \Cat\hookrightarrow \sSet$; see Section 1.4 of \cite{Cisinski:Cambridge}. Denote by $\mathbb{I}$ the free-living isomorphism, i.e., the category with two objects and exactly one morphism in each hom-set. We let $J=N(\mathbb{I})$, and denote its two vertices by 0 and 1. We let $\partial J=\{0\}\cup\{1\}$ in $J$. Since each simplex of $J$ is uniquely determined by its vertices, throughout this paper we denote an $n$-simplex of $J$ by an element $(a_0,\ldots, a_n)$ of $\{0,1\}^{n+1}$.

\subsection{Lifting properties and saturated classes}\label{sub:lifts}

We say that a map $g\colon X\to Y$ has the \emph{right lifting property with respect to} $f\colon A\to B$, denoted by $f\lifts g$, if for all commutative squares
\[
\begin{tikzcd}
{A} \arrow[d, "f"'] \arrow[r, "u"]                      & {X} \arrow[d, "g"] \\
{B} \arrow[r, "v"'] \arrow[ru, "\exists \ell"', dotted] & {Y} \nospace{,}             
\end{tikzcd}
\]
there exists a \emph{lift}, i.e., a dotted arrow $\ell$ making each triangle commute. More generally, we say that a map $g$ has the \emph{right lifting property with respect to} a class of maps $\mathcal{B}$, denoted by $\mathcal{B}\lifts g$, if $f\lifts g$ for all $f$ in $\mathcal{B}$. The class of morphisms with the right lifting property with respect to $\mathcal{B}$ is denoted by $\mathcal{B}^{\lifts}$.

A class of morphisms is \emph{saturated} if it is closed under taking isomorphisms, pushouts, transfinite compositions, and retracts. Given a class of morphisms $\mcB$, we can take its \emph{saturated closure} $\ol{\mcB}$. If a class $\mcB'$ equals $\ol{\mcB}$, we say that $\mcB'$ is \emph{generated by $\mcB$}. For example, the set of \emph{boundary inclusions} $\Bdry=\{\partial\Delta[n]\to\Delta[n]\}_{n\geq 0}$ generates the class $\Mono$ of monomorphisms in $\sSet$. For any class $\mcB$, we have $\mcB{}^{\lifts}=\ol{\mcB}{}^{\ \lifts}$, and for any classes of maps $\mcS$ and $\mcT$, the containment $\mcS\cin \ol{\mcT}$ implies $\ol{\mcS}\cin \ol{\mcT}$. See Section 2.1 of \cite{Cisinski:Cambridge} for more details about lifting properties and saturated classes.

\subsection{Pushout-products}\label{sub:pushoutproducts} Although one can define pushout-products of arbitrary morphisms in a monoidal category, for our purposes it suffices to consider monomorphisms in the monoidal category $(\sSet,\times)$. Given monomorphisms $A\hookrightarrow B$ and $W\hookrightarrow Z$ of simplicial sets, the monomorphism
\[
(Z\times A)\cup(W\times B)\hookrightarrow Z\times B
\]
is called the \emph{pushout-product} of $A\hookrightarrow B$ and $W\hookrightarrow Z$, denoted by $(A\hookrightarrow B)\boxprod (W\hookrightarrow Z)$. Given two classes $\mcS$ and $\mcT$ of maps, we denote by $\mcS\boxprod\mcT$ the class of maps of the form $f\boxprod g$ for $f$ in $\mcS$ and $g$ in $\mcT$.

\subsection{Model structures}\label{sub:modelstructures}

A \emph{model structure} on a complete and cocomplete category consists of a choice of three classes of morphisms, the \emph{cofibrations}, the \emph{fibrations}, and the \emph{weak equivalences}, subject to certain axioms; see \cite[Def.~2.2.1]{Cisinski:Cambridge} or \cite{Hovey}. We say that a morphism which is both a (co)fibration and a weak equivalence is a \emph{trivial (co)fibration}. The \emph{fibrant objects} are those such that the map to the terminal object is a fibration, and the \emph{cofibrant objects} are those such that the map from the initial object is a cofibration.

Instead of describing the axioms of a model category in general, we restrict our focus to \emph{Cisinski model structures} on $\sSet$, which are cofibrantly generated model structures whose cofibrations are precisely the monomorphisms. A model structure is \emph{cofibrantly generated} if there are sets $\mathcal{I}$ and $\mathcal{J}$ such that $\mathcal{I}$ generates the cofibrations and $\mathcal{J}$ generates the trivial cofibrations in the sense of Subsection \ref{sub:lifts}; see \cite[2.4.1]{Cisinski:Cambridge}. Given two Cisinski model structures $\mathcal{M}$ and $\mathcal{M}'$ on $\sSet$ whose classes of weak equivalences are $W$ and $W'$ respectively, we say that $\mathcal{M}'$ is a \emph{localization of $\mathcal{M}$} if $W'\supseteq W$.

It is a standard fact that every Cisinski model structure is left proper because all objects are cofibrant; e.g., see Proposition 13.1.2 in \cite{Hirschhorn}. We state this fact explicitly in the following lemma for later reference.

\begin{lem}\label{lem:leftproper}
In a Cisinski model structure, pushouts along inclusions preserve weak equivalences.
\end{lem}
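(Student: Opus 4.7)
The plan is to reduce the statement to the general principle that any model category in which every object is cofibrant is automatically left proper, which is the content of Proposition 13.1.2 in Hirschhorn cited just above. Since left properness says that pushouts of weak equivalences along cofibrations are weak equivalences, and since a Cisinski model structure has the monomorphisms (i.e.~inclusions) as its cofibrations, this immediately gives the statement I want.

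The first thing I would verify is the ``every object is cofibrant'' hypothesis in our setting. For any simplicial set $X$, the unique map $\varnothing\to X$ out of the initial object is a monomorphism, and by the defining feature of a Cisinski model structure recalled in Subsection \ref{sub:modelstructures}, monomorphisms are precisely the cofibrations. So $\varnothing \to X$ is a cofibration, meaning $X$ is cofibrant. With this in hand, the conclusion of Hirschhorn's proposition applies and is precisely the assertion that pushouts of weak equivalences along inclusions are again weak equivalences.

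For completeness I would sketch why the general principle holds, since the proof is quick and clarifies what is actually going on. Given a weak equivalence $f\colon A\to B$ and an inclusion $i\colon A\hookrightarrow C$, one factors $f$ as a trivial cofibration $A\hookrightarrow A'$ followed by a trivial fibration $A'\to B$. Pushing out $i$ along the trivial cofibration $A\hookrightarrow A'$ yields a trivial cofibration by the standard closure axioms for trivial cofibrations, so in particular a weak equivalence. Pushing out $i$ along the trivial fibration reduces, using that both $C$ and its pushout are cofibrant (together with the two-out-of-three property), to Ken Brown's Lemma applied to the functor ``pushout along $i$.'' Combining the two halves via the pasting lemma for pushouts gives the desired result.

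The main ``obstacle'' here is really no more than bookkeeping: checking that the hypotheses of the cited proposition are met in our setting, namely that Cisinski model structures genuinely have every object cofibrant. Since this is immediate from the identification of cofibrations with monomorphisms, the lemma is essentially a citation, and I would present it as such rather than reproducing the full argument from \cite{Hirschhorn}.
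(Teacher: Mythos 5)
Your proposal is correct and matches the paper exactly: the paper also treats this as a citation to Proposition 13.1.2 of \cite{Hirschhorn}, justified by the observation that every object is cofibrant because the cofibrations in a Cisinski model structure are the monomorphisms. The additional sketch of why cofibrancy of all objects implies left properness is standard and fine, though the paper does not reproduce it.
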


Some properties of arbitrary model structures are also relevant for our purposes:
\begin{enumerate}
    \item The class of weak equivalences satisfies the 2-out-of-3 property: if two of $f$, $g$, and $gf$ are weak equivalence, then so is the third.
    \item The trivial fibrations are precisely the maps with the right lifting property with respect to all cofibrations.
    \item The class of trivial cofibrations is closed under pushouts, transfinite compositions, and retracts.
\end{enumerate}

Another item that could be added to the list above is that the fibrations are precisely the maps with the right lifting property with respect to the trivial cofibrations. In fact, if the trivial cofibrations are generated by a set $\mathcal{J}$, then a map $f$ is a fibration as long as $\mathcal{J}\lifts f$. In particular, an object $X$ is fibrant if and only if $\mathcal{J}\lifts(X\to \ast)$. However, it is often the case that one can only prove abstractly that such a set $\mcJ$ exists. A major example is the Joyal model structure, where it is still an open problem to provide an explicit set of maps which generates the trivial cofibrations. However, in the Joyal model structure we have a particular set of trivial cofibrations, called the inner horns, which identifies the fibrant objects. Cisinski's machinery, which we review in Subsection \ref{sub:Cisinski}, always yields such a set. In Section \ref{sec:knownchar}, we explain how the set $(\{0\}\hookrightarrow J)\boxprod \Bdry$ identifies the fibrant objects in the minimal model structure, and argue that the minimal model structure is truly minimal, relying on the following proposition of Joyal.

\begin{prop}\label{prop:MSdeterminedbyfibobj}\cite[Prop.~E.1.10]{Joyal:theory}
A model structure is determined by its class of cofibrations together with its class of fibrant objects. Furthermore, if two model structures $\mcM=(C,W,F)$ and $\mcM'=(C,W',F')$ on a category have the same cofibrations, then $W\cin W'$ if and only if every fibrant object in $\mcM'$ is also fibrant in $\mcM$.
\end{prop}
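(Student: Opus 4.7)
The first assertion follows from the ``furthermore'' clause by symmetry: if $\mcM$ and $\mcM'$ share both the cofibrations and the fibrant objects, applying the clause in both directions yields $W=W'$, and then the fibrations coincide because in any model structure they are determined as the right lifting complement of $C\cap W$. So it suffices to prove the furthermore clause.

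The forward direction is the easy half. If $W\cin W'$, then $C\cap W\cin C\cap W'$ since the cofibrations agree, and taking right lifting complements shows every fibration of $\mcM'$ is a fibration of $\mcM$; specializing to terminal maps yields that every $\mcM'$-fibrant object is $\mcM$-fibrant.

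For the backward direction, assume every $\mcM'$-fibrant object is $\mcM$-fibrant. Trivial fibrations in either model structure coincide with $C^\lifts$, so the two classes of trivial fibrations agree. Every weak equivalence of $\mcM$ factors in $\mcM$ as a trivial cofibration followed by a trivial fibration, and the latter is automatically a weak equivalence in $\mcM'$; so by 2-out-of-3 in $\mcM'$ it suffices to show that every trivial cofibration $u\colon A\to B$ of $\mcM$ is a weak equivalence of $\mcM'$. Factor such a $u$ in $\mcM'$ as $u=pi$ with $i\colon A\to E$ an $\mcM'$-trivial cofibration and $p\colon E\to B$ an $\mcM'$-fibration. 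By the retract argument, $u$ is a retract of $i$---and hence an $\mcM'$-trivial cofibration, finishing the proof---as soon as $u$ has the left lifting property against $p$.

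The main obstacle is verifying the lifting $u\lifts p$. Since $u$ is an $\mcM$-trivial cofibration, it lifts against every $\mcM$-fibration, so the plan is to show that $p$ is an $\mcM$-fibration. I would first attempt to reduce to the case where $B$, and hence $E$ as the domain of an $\mcM'$-fibration with $\mcM'$-fibrant codomain, is $\mcM'$-fibrant (and so $\mcM$-fibrant by hypothesis), via a suitable fibrant replacement and pushout. The core sub-claim then becomes: a fibration of $\mcM'$ between objects fibrant in both model structures is an $\mcM$-fibration. I anticipate this sub-claim being the main technical difficulty; a reasonable approach is to characterize fibrations with fibrant codomain via path-object factorizations built from cofibrations and maps in $C^\lifts$---data common to both model structures---so that the lifting condition defining an $\mcM$-fibration can be verified using only structure shared with $\mcM'$.
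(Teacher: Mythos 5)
The paper offers no proof of this proposition (it is imported verbatim from Joyal's notes), so your argument has to stand on its own. The easy parts are fine: deducing the first assertion from the ``furthermore'' clause, the forward implication via $C\cap W\cin C\cap W'$ and lifting complements, and the reduction of the backward implication to showing that every $\mcM$-trivial cofibration $u\colon A\to B$ lies in $W'$. The gap is in the step you yourself flag as the main obstacle, and it is not a technicality. (i) The retract argument requires factoring $u$ itself, so your $\mcM'$-fibration $p$ has codomain $B$, an arbitrary object. There is no ``reduction to the case $B$ fibrant'': replacing $B$ by an $\mcM'$-fibrant object $\hat B$ turns $u$ into a composite $A\to\hat B$ that is no longer known to be an $\mcM$-trivial cofibration (the replacement map $B\to\hat B$ is only an $\mcM'$-equivalence), and it destroys the retract argument, which needs $p$ to land in $B$; pushouts preserve trivial cofibrations but do nothing to make a codomain fibrant. (ii) The tool you propose for the core sub-claim fails: factoring the diagonal $X\to X\times X$ as a cofibration followed by a map in $C^{\lifts}$ does not produce a path object, because by 2-out-of-3 against the second map the first map $X\to PX$ is a weak equivalence only when the diagonal is one, which is false in general. (iii) More structurally, since $W\cin W'$ is \emph{equivalent} to $F'\cin F$, your sub-claim that $\mcM'$-fibrations are $\mcM$-fibrations carries essentially the full content of the theorem, so one should not expect a purely formal lifting argument to establish it.

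What the argument actually needs is the homotopy-theoretic machinery built from the data common to both structures. For cofibrant $A$, a factorization $A\sqcup A\rightarrowtail \mathrm{Cyl}(A)\to A$ with second map in $C^{\lifts}$ \emph{is} a genuine cylinder in both model structures --- this is the correct ``common'' construction, dual to your path-object attempt, and here 2-out-of-3 works in your favor. One then argues: reduce to $u$ between cofibrant objects via cofibrant replacement; take an $\mcM'$-fibrant replacement $j\colon B\to\hat B$ and factor $ju$ as an $\mcM'$-trivial cofibration $i\colon A\to E$ followed by an $\mcM'$-fibration $p\colon E\to\hat B$; since $E$ and $\hat B$ are $\mcM'$-fibrant they are $\mcM$-fibrant by hypothesis (this is where the hypothesis enters), so the $\mcM$-trivial cofibration $u$ lifts against $E\to\ast$ to give $\ell\colon B\to E$ with $\ell u=i$; show $p\ell$ is homotopic to $j$ using the common cylinder and the injectivity of $[B,\hat B]\to[A,\hat B]$ induced by $u$; and conclude $u\in W'$ either from the 2-out-of-6 property of $W'$ applied to $\ell u=i\in W'$ and $p\ell\in W'$, or by upgrading $\ell$ to a homotopy inverse of $p$ and invoking the Whitehead theorem in $\mcM'$. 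These ingredients --- common cylinders, homotopy classes of maps, Whitehead or 2-out-of-6 --- are precisely what your outline is missing.
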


\subsection{Cisinski's theory}\label{sub:Cisinski}

The following definition is essential to Cisinski's theory.

\begin{defn}\cite[Def.~2.4.11]{Cisinski:Cambridge}\label{def:anodyneclass}
We say that a class of morphisms $\ol{\Lambda}$ generated by a set $\Lambda$ of monomorphisms is a \emph{$(J\times -)$-anodyne class} if the following conditions hold.
\begin{enumerate}[start=1,label={(An\arabic*).\ }, widest=(An2$'$.), leftmargin=*]
\item The class $(\{0\}\hookrightarrow J)\boxprod\Mono$ is contained in $\ol{\Lambda}$.
\item For each $A\hookrightarrow B$ in $\ol{\Lambda}$, the pushout-product $(\partial J\hookrightarrow J)\boxprod (A\hookrightarrow B)$ is also in $\ol{\Lambda}$.
\end{enumerate}
\end{defn}

We can restate each of axioms (An1) and (An2) in a form which is easier to check.

\begin{lem}\cite[Lem.~2.10]{Feller:generalizing}\label{lem:altaxioms}
Let $\Lambda$ be a set of monomorphisms. Then axiom \emph{(An1)} is equivalent to \emph{(An1$'$)} below and axiom \emph{(An2)} is equivalent to \emph{(An2$'$)} below.
\end{lem}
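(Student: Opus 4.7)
The content of (An1$'$) and (An2$'$) is not displayed in the excerpt, but from context they almost certainly read: (An1$'$) the class $(\{0\}\hookrightarrow J)\boxprod\Bdry$ is contained in $\ol{\Lambda}$; and (An2$'$) for each generator $A\hookrightarrow B$ in $\Lambda$ itself (rather than for every $A\hookrightarrow B$ in $\ol{\Lambda}$), the pushout-product $(\partial J\hookrightarrow J)\boxprod(A\hookrightarrow B)$ lies in $\ol{\Lambda}$. Both simplifications have the same flavor: they replace a test against a full saturated class by a test against a generating set, and the whole lemma rests on a single general principle.

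The plan is to isolate this principle as a preliminary observation and then apply it twice. The observation is that for any fixed monomorphism $f\colon W\hookrightarrow Z$ and any class of monomorphisms $\mcT$, one has
\[
f\boxprod \ol{\mcT}\ \subseteq\ \ol{f\boxprod \mcT}.
\]
This is a routine consequence of the fact that the pushout-product with a fixed map commutes, up to the standard diagrammatic reshuffling, with pushouts, transfinite compositions, and retracts; equivalently, the class $\{g : f\boxprod g\in \ol{f\boxprod \mcT}\}$ is saturated and contains $\mcT$, hence contains $\ol{\mcT}$. I would record this explicitly as the first step of the proof, since it is the only nontrivial ingredient and both equivalences follow from it by a one-line calculation.

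For (An1)$\Leftrightarrow$(An1$'$), the forward implication is immediate because $\Bdry\subseteq\Mono$. For the converse, I would apply the observation with $f=(\{0\}\hookrightarrow J)$ and $\mcT=\Bdry$, together with the already-recorded fact that $\ol{\Bdry}=\Mono$, to obtain
\[
(\{0\}\hookrightarrow J)\boxprod\Mono\ =\ (\{0\}\hookrightarrow J)\boxprod \ol{\Bdry}\ \subseteq\ \ol{(\{0\}\hookrightarrow J)\boxprod\Bdry}\ \subseteq\ \ol{\Lambda},
\]
where the last containment uses (An1$'$) and the monotonicity property $\mcS\subseteq \ol{\mcT}\Rightarrow \ol{\mcS}\subseteq\ol{\mcT}$ from Subsection \ref{sub:lifts}. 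For (An2)$\Leftrightarrow$(An2$'$), the forward direction is again trivial since $\Lambda\subseteq\ol{\Lambda}$. For the converse, I would apply the observation with $f=(\partial J\hookrightarrow J)$ and $\mcT=\Lambda$ to get
\[
(\partial J\hookrightarrow J)\boxprod \ol{\Lambda}\ \subseteq\ \ol{(\partial J\hookrightarrow J)\boxprod \Lambda}\ \subseteq\ \ol{\,\ol{\Lambda}\,}\ =\ \ol{\Lambda},
\]
with (An2$'$) used to justify the middle inclusion.

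The only place any real work is done is in the preliminary observation, so that is where I expect the main obstacle to lie; however, it is genuinely standard, amounting to checking closure of a single class under the three saturation operations, each of which is a diagram chase in the functor $f\boxprod(-)$. Once that is in hand, both equivalences reduce to the symbolic manipulations displayed above.
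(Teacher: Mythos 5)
Your proposal is correct and matches the intended argument: the paper defers the detailed proof to \cite{Feller:generalizing}, but the strategy is exactly yours---the forward implications are trivial, and the converses follow from the standard fact that $f\boxprod\ol{\mcT}\subseteq\ol{f\boxprod\mcT}$ for a fixed monomorphism $f$, applied once with $f=(\{0\}\hookrightarrow J)$, $\mcT=\Bdry$ (using $\ol{\Bdry}=\Mono$) and once with $f=(\partial J\hookrightarrow J)$, $\mcT=\Lambda$. In particular, your first application is precisely the content of Lemma~\ref{lem:equivofan1givespushoutprod}, which the paper singles out as the substance of the (An1)$\Leftrightarrow$(An1$'$) equivalence.
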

\begin{enumerate}[start=1,label={(An\arabic*$'$).\ }, widest=(An2$'$.), leftmargin=*]
\item The class $(\{0\}\hookrightarrow J)\boxprod\Bdry$ is contained in $\ol{\Lambda}$.
\item For each $A\hookrightarrow B$ in $\Lambda$, the pushout-product $(\partial J\hookrightarrow J)\boxprod (A\hookrightarrow B)$ is in $\ol{\Lambda}$.
\end{enumerate}

We refer to \cite{Feller:generalizing} for a proof of Lemma \ref{lem:altaxioms}, although we note that proving the equivalence of (An1) and (An1$'$) amounts to proving the following fact which we state as a lemma for later reference.

\begin{lem}\label{lem:equivofan1givespushoutprod}
The following classes are equal:
\[
\ol{(\{0\}\hookrightarrow J)\boxprod \Bdry}=\ol{(\{0\}\hookrightarrow J)\boxprod \Mono}.
\]
\end{lem}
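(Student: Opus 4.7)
The plan is to prove the two containments separately. The containment $\ol{(\{0\}\hookrightarrow J)\boxprod \Bdry}\subseteq \ol{(\{0\}\hookrightarrow J)\boxprod \Mono}$ is immediate: since $\Bdry\subseteq\Mono$, applying $(\{0\}\hookrightarrow J)\boxprod -$ to both sides preserves the inclusion, and so does saturation.

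For the reverse containment, I would fix $f=(\{0\}\hookrightarrow J)$ and consider the class
\[
\mcC:=\{g\in\Mono : f\boxprod g\in\ol{f\boxprod\Bdry}\}.
\]
The goal is to show that $\mcC$ is saturated. Since $\Bdry\subseteq\mcC$ tautologically, saturation of $\mcC$ gives $\Mono=\ol{\Bdry}\subseteq\mcC$, which says exactly that $f\boxprod\Mono\subseteq\ol{f\boxprod\Bdry}$; saturating both sides finishes the proof.

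The key ingredient in showing that $\mcC$ is saturated is that, for the fixed map $f$, the functor $f\boxprod(-)$ from the arrow category of $\sSet$ to itself commutes with colimits in its second argument (it is a left adjoint via the pushout-product/pullback-hom adjunction). From this I would deduce the following preservation properties: (a) isomorphisms of arrows are sent to isomorphisms; (b) a pushout of $g$ in the arrow category along some $A\to A'$ produces an arrow $g'$ such that $f\boxprod g'$ is a pushout of $f\boxprod g$; (c) transfinite compositions are preserved by a standard interchange of colimits; (d) retracts in the arrow category yield retracts after applying $f\boxprod(-)$ by a short diagram chase. In each case the saturation of $\ol{f\boxprod\Bdry}$ then implies the corresponding closure property for $\mcC$.

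The main bookkeeping obstacle is verifying (b), since the domain of $f\boxprod g$ is itself an ``L-shaped'' pushout $\{0\}\times B\cup J\times A$. Unwinding this definition and using that $\times$ preserves pushouts in each variable, one checks that if $B'=A'\cup_A B$ then $\{0\}\times B'\cup J\times A'$ is the corresponding pushout of $\{0\}\times B\cup J\times A$ along the induced map, so $f\boxprod g'$ is indeed a pushout of $f\boxprod g$ in the arrow category. The remaining closure properties are analogous and routine once (b) is in place.
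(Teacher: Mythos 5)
Your proof is correct, and it is essentially the argument the paper relies on: the paper does not prove this lemma in-text but defers it to \cite[Lem.~2.10]{Feller:generalizing}, where the reverse containment is established by exactly this kind of saturation argument (fix $f=(\{0\}\hookrightarrow J)$, use that $f\boxprod(-)$ is a left adjoint in the two-variable adjunction, and check that the class of $g$ with $f\boxprod g\in\ol{f\boxprod\Bdry}$ is saturated). One small point of care in step (c): the maps $f\boxprod g_\alpha$ for the successive stages $g_\alpha\colon Y_\alpha\to Y_{\alpha+1}$ of a transfinite composition are not literally composable, since the codomain of $f\boxprod g_\alpha$ is $J\times Y_{\alpha+1}$ while the domain of $f\boxprod g_{\alpha+1}$ is $(J\times Y_{\alpha+1})\cup(\{0\}\times Y_{\alpha+2})$. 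The correct statement, which follows from your item (b) together with the colimit interchange, is that $f\boxprod(\text{the composite})$ decomposes as a transfinite composition of \emph{pushouts} of the maps $f\boxprod g_\alpha$ (filtering $J\times Y_\lambda$ by the subobjects $(\{0\}\times Y_\lambda)\cup(J\times Y_\alpha)$); since $\ol{f\boxprod\Bdry}$ is closed under both operations, your conclusion stands.
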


The main result we need from Cisinski is that for every $(J\times -)$-anodyne class $\ol{\Lambda}$, there is a Cisinski model structure such that $\Lambda$ identifies the fibrant objects in that model structure.

\begin{thm}\label{thm:cisinskiMS}\cite[Thm.~2.4.19]{Cisinski:Cambridge}
Given a set of monomorphisms $\Lambda$ such that $\ol{\Lambda}$ is a $(J\times-)$-anodyne class, there is a cofibrantly generated model structure on $\sSet$ whose cofibrations are the monomorphisms and whose fibrant objects are the simplicial sets with the right lifting property with respect to $\Lambda$.
\end{thm}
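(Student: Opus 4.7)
The plan is to define cofibrations as the monomorphisms and fibrations as the maps in $\ol{\Lambda}^{\lifts}$, so that the fibrant objects are automatically those with the right lifting property with respect to $\Lambda$. One then defines weak equivalences using the $J$-cylinder: call $f\colon X\to Y$ a \emph{weak equivalence} if for every fibrant $Z$, the induced map on sets of $J$-homotopy classes of maps into $Z$ is a bijection, where two maps $A\to Z$ are $J$-homotopic when they fit into a single map $A\times J\to Z$. The 2-out-of-3 property and stability under retracts are then immediate. Quillen's small object argument applied to $\Bdry$ and $\Lambda$ yields the two required functorial factorizations $(\text{monos},\ \Bdry^{\lifts})$ and $(\ol{\Lambda},\ \text{fibrations})$, and the lifting axiom for cofibrations against $\Bdry^{\lifts}$ holds by construction.

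The main obstacle is identifying the class of trivial cofibrations with $\ol{\Lambda}$; this rests on two key claims. First, every map $A\hookrightarrow B$ in $\ol{\Lambda}$ is a weak equivalence: axioms (An1) and (An2) together force the closure $\ol{\Lambda}\boxprod \Mono\subseteq \ol{\Lambda}$, which by exponential adjunction means that for any fibrant $Z$ the restriction $Z^B\to Z^A$ has the right lifting property with respect to all monomorphisms and in particular induces a bijection on $J$-homotopy classes. Second, every map $p\in \Bdry^{\lifts}$ is a weak equivalence: the lifting property against $(\{0\}\hookrightarrow J)\boxprod\Bdry$, supplied by (An1$'$) via Lemma \ref{lem:altaxioms}, produces a section of $p$ together with a $J$-homotopy from the composite back to the identity, which under $\Hom(-,Z)$ for any fibrant $Z$ becomes a homotopy inverse.

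With these claims in hand, the final lifting axiom follows from a two-step retract argument. Any fibration $p$ which is a weak equivalence factors as $p=q\circ i$ with $i$ a monomorphism and $q\in \Bdry^{\lifts}$; by the second claim and 2-out-of-3, $i$ is a weak equivalence, and using (An1) to fill the requisite cylinder-homotopies one shows $i\in\ol{\Lambda}$, so $i$ has the left lifting property against $p$, making $p$ a retract of $q$ and hence $p\in\Bdry^{\lifts}$. Now for any trivial cofibration $f$, factor $f=p\circ j$ with $j\in \ol{\Lambda}$ and $p$ a fibration; by the first claim and 2-out-of-3, $p$ is a weak equivalence, hence in $\Bdry^{\lifts}$ by the previous step, so $f$ lifts against $p$ and is a retract of $j$, placing $f$ in $\ol{\Lambda}$. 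This completes the verification of the model-category axioms and confirms the promised characterization of the fibrant objects.
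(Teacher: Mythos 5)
The paper does not actually prove this statement---it is quoted from \cite[Thm.~2.4.19]{Cisinski:Cambridge}---so your attempt is a reconstruction of Cisinski's argument, and it contains a genuine gap. Your plan rests on defining the fibrations to be $\ol{\Lambda}^{\lifts}$ and then identifying the class of trivial cofibrations with $\ol{\Lambda}$. That identification is strictly stronger than what the theorem asserts (Cisinski only characterizes the \emph{fibrant objects} by lifting against $\Lambda$), and it is not expected to hold in general: the Joyal model structure arises from exactly such a set $\Lambda$ (inner horns together with the maps forced by (An1) and (An2)), yet, as noted in Subsection \ref{sub:modelstructures}, it is an open problem to exhibit an explicit generating set for its trivial cofibrations---your argument would produce one. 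The precise point of failure is the step ``using (An1) to fill the requisite cylinder-homotopies one shows $i\in\ol{\Lambda}$'': there $i$ is an arbitrary monomorphism which is a weak equivalence, i.e.\ exactly the kind of map you are trying to place in $\ol{\Lambda}$ in the very next sentence, so the two halves of your retract argument each presuppose the other. The deformation-retract trick you are gesturing at (a monomorphism admitting a retraction and a homotopy rel the subobject is a retract of a map supplied by (An1)) needs that retraction and that homotopy as input, and you cannot construct them for $i\colon X\to Z$ without already knowing that something lifts against $i$.

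A secondary issue: in your first key claim you assert that (An1) and (An2) force $\ol{\Lambda}\boxprod\Mono\cin\ol{\Lambda}$, so that $Z^B\to Z^A$ has the right lifting property against all monomorphisms. Axiom (An2) only gives closure under pushout-product with the single map $\partial J\hookrightarrow J$, not with all monomorphisms, and the stronger closure fails for general anodyne classes. The conclusion you want---that maps in $\ol{\Lambda}$ are weak equivalences---is nevertheless true, but the proof uses (An2) directly: surjectivity of $[B,Z]\to[A,Z]$ is fibrancy of $Z$, and injectivity is precisely the lift against $(\partial J\hookrightarrow J)\boxprod(A\hookrightarrow B)$. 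Cisinski's actual route avoids your global identification entirely: the substantive step is that a \emph{fibrant object} (not a general naive fibration) has the right lifting property against every trivial cofibration, proved by using the homotopy extension property from (An1) to upgrade an extension-up-to-homotopy to a genuine extension, after which the model structure is produced by Smith's recognition theorem rather than by exhibiting the trivial cofibrations as $\ol{\Lambda}$.
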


\begin{rmk}
Cisinski's results are much more general than stated here, but this formulation suffices for our purposes in this paper.
\end{rmk}

\section{The usual characterization of fibrant objects in the minimal model structure}\label{sec:knownchar}

In this section we recall the known characterization of the minimal model structure. First, recall that the map $J\to \ast$ is a trivial fibration (and hence a weak equivalence) in any Cisinski model structure because it has the right lifting property with respect to all monomorphisms. Furthermore, taking the pullback along $X\to \ast$ shows that for any simplicial set $X$ the projection map $J\times X\to X$ is also a weak equivalence. By the 2-out-of-3 property, the inclusion $\{0\}\times \id\colon X\hookrightarrow J\times X$ is therefore a weak equivalence too. In particular, for all $n\geq 0$ the left vertical map and the rightmost curved map in the diagram
\[
\begin{tikzcd}
	{\{0\}\times\partial\Delta[n]} & {\{0\}\times\Delta[n]} \\
	{J\times\partial\Delta[n]} & {(J\times\partial\Delta[n])\cup(\{0\}\times\Delta[n])} \\
	&& {J\times\Delta[n]}
	\arrow[hook, from=1-1, to=1-2]
	\arrow["\sim"', hook, from=1-1, to=2-1]
	\arrow["\sim"', hook, from=1-2, to=2-2]
	\arrow[hook, from=2-1, to=2-2]
	\arrow[dotted, hook, from=2-2, to=3-3]
	\arrow["\sim", curve={height=-18pt}, hook, from=1-2, to=3-3]
	\arrow[curve={height=12pt}, hook, from=2-1, to=3-3]
\end{tikzcd}
\]
are weak equivalences. The vertical map on the right is also a weak equivalence by Lemma \ref{lem:leftproper} since the inner square is a pushout, so by the 2-out-of-3 property the induced map
\[\left(\{0\}\hookrightarrow J\right)\boxprod \left(\partial\Delta[n]\hookrightarrow \Delta[n]\right)\]
is a weak equivalence. Since each of these pushout-product maps are monomorphisms, we have proved the following proposition.

\begin{prop}\label{prop:pushoutprodWEs}
Let $\mcA=\left(\{0\}\hookrightarrow J\right)\boxprod \Bdry$. The maps in $\mcA$ are trivial cofibrations in every Cisinski model structure.
\end{prop}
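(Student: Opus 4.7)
The plan is to show that every map in $\mcA$ is simultaneously a monomorphism and a weak equivalence. The monomorphism part is automatic, since pushout-products of monomorphisms in $(\sSet,\times)$ are monomorphisms, so the real work is establishing that each pushout-product is a weak equivalence in an arbitrary Cisinski model structure. I expect the main obstacle to be getting started: I have no direct handle on the weak equivalences of an arbitrary Cisinski model structure beyond the abstract axioms, so I must bootstrap from the one weak equivalence I do control, namely the map $J\to\ast$.

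First I would observe that $J\to\ast$ has the right lifting property with respect to every monomorphism, essentially because $J=N(\mathbb{I})$ and every arrow in $\mathbb{I}$ is an isomorphism, so any partial functor into $\mathbb{I}$ extends; by general nonsense about Cisinski model structures (item (2) in the list following Lemma \ref{lem:leftproper}) this makes $J\to\ast$ a trivial fibration, hence a weak equivalence. Pulling this trivial fibration back along an arbitrary $X\to\ast$ gives that $J\times X\to X$ is a trivial fibration too, and then 2-out-of-3 applied to the section $\{0\}\times\id$ shows that $\{0\}\times X\hookrightarrow J\times X$ is a weak equivalence for every simplicial set $X$.

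With that in hand, I would apply the previous step to both $X=\partial\Delta[n]$ and $X=\Delta[n]$, producing the standard pushout-product square whose two ``vertical'' legs are weak equivalences. Left properness (Lemma \ref{lem:leftproper}) then ensures that the pushout of $\{0\}\times\partial\Delta[n]\hookrightarrow\{0\}\times\Delta[n]$ along the monomorphism $\{0\}\times\partial\Delta[n]\hookrightarrow J\times\partial\Delta[n]$ gives a weak equivalence $J\times\partial\Delta[n]\hookrightarrow (J\times\partial\Delta[n])\cup(\{0\}\times\Delta[n])$. A final application of 2-out-of-3 to the factorization of $\{0\}\times\Delta[n]\hookrightarrow J\times\Delta[n]$ through this pushout yields that the pushout-product map itself is a weak equivalence, completing the proof.
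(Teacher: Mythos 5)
Your overall strategy is exactly the paper's: bootstrap from the trivial fibration $J\to\ast$, deduce that $\{0\}\times X\hookrightarrow J\times X$ is a weak equivalence for every $X$, and then combine left properness with 2-out-of-3 on the pushout-product square. However, the left-properness step is applied to the wrong leg of the square, and the resulting intermediate claim is false. You assert that pushing out $\{0\}\times\partial\Delta[n]\hookrightarrow\{0\}\times\Delta[n]$ along the monomorphism $\{0\}\times\partial\Delta[n]\hookrightarrow J\times\partial\Delta[n]$ yields a weak equivalence $J\times\partial\Delta[n]\hookrightarrow (J\times\partial\Delta[n])\cup(\{0\}\times\Delta[n])$. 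Lemma \ref{lem:leftproper} does not apply here: the map being pushed out must itself be the weak equivalence, and the boundary inclusion $\partial\Delta[n]\hookrightarrow\Delta[n]$ is not one. Indeed the claimed conclusion already fails in the Kan--Quillen model structure, where $J\times\partial\Delta[n]\simeq S^{n-1}$ while $(J\times\partial\Delta[n])\cup(\{0\}\times\Delta[n])$ is contractible. Moreover, even granting it, your final 2-out-of-3 is applied to the factorization of $\{0\}\times\Delta[n]\hookrightarrow J\times\Delta[n]$ through the pushout, which passes through the \emph{other} leg $\{0\}\times\Delta[n]\hookrightarrow(J\times\partial\Delta[n])\cup(\{0\}\times\Delta[n])$ --- the map whose weak-equivalence status you actually need but never establish.

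The fix is a one-line transposition: push out the weak equivalence $\{0\}\times\partial\Delta[n]\hookrightarrow J\times\partial\Delta[n]$ along the monomorphism $\{0\}\times\partial\Delta[n]\hookrightarrow\{0\}\times\Delta[n]$. Lemma \ref{lem:leftproper} then says the cobase change $\{0\}\times\Delta[n]\hookrightarrow(J\times\partial\Delta[n])\cup(\{0\}\times\Delta[n])$ is a weak equivalence, and 2-out-of-3 against the weak equivalence $\{0\}\times\Delta[n]\hookrightarrow J\times\Delta[n]$ gives that the pushout-product is one too. With that correction your argument coincides with the paper's proof.
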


As a consequence, if a simplicial set $X$ is fibrant in a Cisinski model structure, then $\mcA \lifts (X\to \ast)$. Therefore, if we can show that there is a Cisinski model structure whose fibrant objects are precisely the simplicial sets $X$ such that $\mcA \lifts (X\to \ast)$, such a model structure necessarily has the broadest possible class of fibrant objects and therefore the smallest class of weak equivalences by Proposition \ref{prop:MSdeterminedbyfibobj}.

It now remains to show that there is a model structure whose fibrant objects are the simplicial sets with lifts with respect to maps in $\mcA$. We can see right away that $\mcA$ satisfies axiom (An1$'$) of Lemma \ref{lem:altaxioms} (and therefore (An1) too) since the maps in $\mcA$ are precisely the maps demanded by axiom (An1$'$). Thus, it remains only to show that axiom (An2$'$) is satisfied. The crucial observation is that the maps in (An2$'$) are themselves the iterated pushout products
\[
(\partial J\hookrightarrow J)\boxprod \left((\{0\}\hookrightarrow J)\boxprod (\partial\Delta[n]\hookrightarrow \Delta[n])\right),
\]
but since the pushout-product inherits the commutativity and associativity of the Cartesian product, they can be rewritten as
\[
(\{0\}\hookrightarrow J)\boxprod \left((\partial J\hookrightarrow J)\boxprod (\partial\Delta[n]\hookrightarrow \Delta[n])\right).
\]
Since
\[
(\partial J\hookrightarrow J)\boxprod (\partial\Delta[n]\hookrightarrow \Delta[n])
\]
is a monomorphism, we know that its pushout-product with $\{0\}\hookrightarrow J$ is in $\ol{\mcA}$ because $\mcA$ satisfies axiom (An1). Thus, we see that axiom (An2$'$) is satisfied, proving the following proposition.

\begin{prop}\label{prop:Aanodyne}
The set $\mcA=\left(\{0\}\hookrightarrow J\right)\boxprod \Bdry$ generates a $(J\times -)$-anodyne class.
\end{prop}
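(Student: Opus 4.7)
The plan is to verify axioms (An1) and (An2) of Definition \ref{def:anodyneclass} for $\ol{\mcA}$, which by Lemma \ref{lem:altaxioms} reduces to checking (An1$'$) and (An2$'$). Axiom (An1$'$) is immediate, since $\mcA$ is literally defined as $(\{0\}\hookrightarrow J)\boxprod\Bdry$, so every map demanded by (An1$'$) is already in $\mcA\subseteq \ol{\mcA}$. All the work is in (An2$'$).

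For (An2$'$), I would start with an arbitrary element of $\mcA$, which has the form $(\{0\}\hookrightarrow J)\boxprod (\partial\Delta[n]\hookrightarrow\Delta[n])$ for some $n\ge 0$, and consider its pushout-product with $\partial J\hookrightarrow J$. The key algebraic observation is that the pushout-product operation on monomorphisms in $(\sSet,\times)$ inherits commutativity and associativity from the Cartesian product, so this iterated pushout-product can be rewritten as
\[
(\{0\}\hookrightarrow J)\boxprod \bigl((\partial J\hookrightarrow J)\boxprod (\partial\Delta[n]\hookrightarrow \Delta[n])\bigr).
\]
The inner pushout-product $(\partial J\hookrightarrow J)\boxprod(\partial\Delta[n]\hookrightarrow\Delta[n])$ is a monomorphism, because pushout-products of monomorphisms are monomorphisms. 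So the whole expression belongs to $(\{0\}\hookrightarrow J)\boxprod\Mono$.

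At this point I would invoke Lemma \ref{lem:equivofan1givespushoutprod}, which identifies $\ol{(\{0\}\hookrightarrow J)\boxprod\Bdry}$ with $\ol{(\{0\}\hookrightarrow J)\boxprod\Mono}$, to conclude that our map lies in $\ol{\mcA}$. This finishes the verification of (An2$'$), and hence the proposition.

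The argument is almost entirely formal manipulation of pushout-products, so I do not expect a genuine obstacle; the only subtlety is making sure that the commutativity/associativity rearrangement is justified and that one has Lemma \ref{lem:equivofan1givespushoutprod} available to convert a pushout-product against an arbitrary monomorphism into a member of the saturation of $\mcA$. Both are already in hand, so the proof amounts to stringing these observations together.
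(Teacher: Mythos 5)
Your proposal is correct and follows essentially the same route as the paper: (An1$'$) is satisfied by definition of $\mcA$, and for (An2$'$) one rewrites the iterated pushout-product using commutativity and associativity and then observes that the result lies in $(\{0\}\hookrightarrow J)\boxprod\Mono$, whose saturation coincides with $\ol{\mcA}$. Your appeal to Lemma \ref{lem:equivofan1givespushoutprod} is the same fact the paper invokes via axiom (An1), so the two arguments are interchangeable.
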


\begin{cor}\label{cor:minimalMSexists}
There is a Cisinski model structure on simplicial sets where a simplicial set 
$X$ is fibrant if and only if $X\to \ast$ has the right lifting property with respect to the set $\mcA$. Every Cisinski model structure on simplicial sets is a localization of this model structure, which we call the \emph{minimal Cisinski model structure}.
\end{cor}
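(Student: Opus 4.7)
The plan is to assemble the corollary directly from the three pieces that have just been established: Theorem \ref{thm:cisinskiMS}, Proposition \ref{prop:Aanodyne}, and Proposition \ref{prop:pushoutprodWEs}, together with Joyal's Proposition \ref{prop:MSdeterminedbyfibobj}. No new combinatorial work should be required; the corollary is essentially a packaging statement.

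First, for the existence half, I would simply observe that by Proposition \ref{prop:Aanodyne} the set $\mcA$ generates a $(J\times-)$-anodyne class. Applying Theorem \ref{thm:cisinskiMS} with $\Lambda=\mcA$ then yields a cofibrantly generated model structure on $\sSet$ whose cofibrations are the monomorphisms and whose fibrant objects are exactly those simplicial sets $X$ for which $\mcA\lifts (X\to\ast)$. Call this model structure $\mcM_{\min}$.

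For the minimality half, let $\mcM'$ be any Cisinski model structure on $\sSet$. Since $\mcM'$ and $\mcM_{\min}$ have the same class of cofibrations (namely the monomorphisms), Proposition \ref{prop:MSdeterminedbyfibobj} applies. By Proposition \ref{prop:pushoutprodWEs}, every map in $\mcA$ is a trivial cofibration in $\mcM'$, so if $X$ is fibrant in $\mcM'$ then $X\to\ast$ has the right lifting property with respect to each map in $\mcA$; by the characterization just established, $X$ is also fibrant in $\mcM_{\min}$. Hence every $\mcM'$-fibrant object is $\mcM_{\min}$-fibrant, and the ``furthermore'' part of Proposition \ref{prop:MSdeterminedbyfibobj} gives $W_{\min}\subseteq W'$, i.e., $\mcM'$ is a localization of $\mcM_{\min}$.

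There is essentially no hard step here; the only thing one has to be careful about is matching the set-up to the hypotheses of Proposition \ref{prop:MSdeterminedbyfibobj} (same cofibrations, comparing classes of fibrant objects in the correct direction). Everything else is a direct citation of the preceding propositions.
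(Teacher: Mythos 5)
Your proposal is correct and matches the paper's own argument: existence follows by applying Theorem \ref{thm:cisinskiMS} to Proposition \ref{prop:Aanodyne}, and minimality follows from Proposition \ref{prop:pushoutprodWEs} combined with Proposition \ref{prop:MSdeterminedbyfibobj}. The only difference is that you spell out the appeal to Joyal's proposition inside the proof, whereas the paper states that step in the discussion preceding the corollary and keeps the proof itself terse.
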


\begin{proof}
For the first claim, apply Theorem \ref{thm:cisinskiMS} to Proposition \ref{prop:Aanodyne}. For the second claim, by Proposition \ref{prop:pushoutprodWEs} we know that for every Cisinski model structure, the fibrant objects must have lifts with respect to maps in $\mcA$.
\end{proof}

\begin{rmk}
Cisinski points out the existence of a minimal model structure on any presheaf category in \cite{Cisinski:Asterisque}, using the subobject classifier of $\sSet$ instead of the simplicial set $J$. (For a discussion of the subobject classifier in a presheaf category such as $\sSet$, see Example 2.1.11 in \cite{Cisinski:Cambridge}.) However, for the purposes of describing the minimal model structure, the only relevant properties of the subobject classifier $\Omega$ is that it has two distinct 0-simplices and that $\Omega\to \ast$ has the right lifting property with respect to all monomorphisms, which are properties that $J$ has as well.
\end{rmk}

\begin{rmk}
The precise description in Corollary \ref{cor:minimalMSexists} in terms of the set $\mcA$ is a special case of the description given in Paragraph 2.10 of \cite{Ara} of a minimal anodyne class containing a given set $S$, where we let $S=\varnothing$.
\end{rmk}

\begin{para}\label{par:htpyextension}
We conclude this section by noting that we now have two equivalent conditions for fibrancy in the minimal model structure. On the one hand, we know that $X$ is fibrant if and only if $X\to \ast$ has the right lifting property with respect to $\mcA=\left(\{0\}\hookrightarrow J\right)\boxprod \Bdry$. Since $\mcA$ is countable and relatively easy to describe, this characterization is useful for trying to verify that a simplicial set is indeed fibrant. On the other hand, by Lemma \ref{lem:equivofan1givespushoutprod}, a fibrant simplicial set $X$ must have lifts with respect to all maps in $\left(\{0\}\hookrightarrow J\right)\boxprod \Mono$. We can view this latter condition as a homotopy extension property, which gives a useful conceptual understanding of the fibrant objects in the minimal model structure.
\end{para}

\section{Widenings and widened inclusions}\label{sec:wide}

In this section, we introduce the concepts of \emph{widenings} and \emph{widened inclusions}. To explain the idea behind widened inclusions, let us consider the ``homotopy extension property'' perspective mentioned in Paragraph \ref{par:htpyextension}. With this perspective, we view an extension along $(\{0\}\hookrightarrow J)\boxprod (X\hookrightarrow Y)$ as extending a map from $Y$ along a homotopy of the sub-complex $X$. The idea of a widened inclusion is exactly the same, except that this homotopy is fixed on a certain sub-complex of $Y$. Thus, the class of widened inclusions we define will contain the maps in $(\{0\}\hookrightarrow J)\boxprod \Mono$. We first make a retract argument to show that the fibrant objects in the minimal model structure have lifts with respect to widened inclusions, and so satisfy a slightly stronger version of the homotopy extension property. Then we prove a proposition about widened inclusions which will be important in Section \ref{sec:isohornchar}.

Let us begin by defining widenings.

\begin{defn}
Given a simplicial set $X$ and a set of vertices $\nu\cin X_0$, we define the \emph{widening of $X$ at $\nu$} to be the full subcomplex of $J\times X$ on the set $\left(\{0\}\times X_0\right)\cup \left(\{1\}\times \nu\right)$. We denote the widening of $X$ at $\nu$ by $W_{\nu}(X)$, although if $\nu$ has just one element $x$ then we write $W_x(X)$ instead of $W_{\{x\}}(X)$.
\end{defn}

\begin{ex}
Given a simplicial set $X$, the widening of $X$ at its set of vertices is $W_{X_0}(X)=J\times X$. The widening of $X$ at the empty set is $W_{\varnothing}(X)=\{0\}\times X$.
\end{ex}

\begin{ex}\label{ex:isoplexiswidening}
The simplicial set $W_2(\Delta[2])$ (i.e., the widening of $\Delta[2]$ at the vertex $2$) can be pictured as
\[\begin{tikzcd}
	&& {(0,1)} \\
	{(0,0)} &&&& {(0,2)} \\
	\\
	&&&& {(1,2)\nospace{,}}
	\arrow[from=2-1, to=1-3]
	\arrow[from=1-3, to=2-5]
	\arrow[from=1-3, to=4-5]
	\arrow[shift right=1, from=2-5, to=4-5]
	\arrow[shift right=1, from=4-5, to=2-5]
	\arrow[from=2-1, to=2-5, crossing over]
	\arrow[from=2-1, to=4-5]
\end{tikzcd}\]
which is the nerve of a category that we can depict as
\[\begin{tikzcd}
	\bullet & \bullet & \bullet & \bullet\nospace{.}
	\arrow[from=1-1, to=1-2]
	\arrow[from=1-2, to=1-3]
	\arrow[shift right=1, from=1-3, to=1-4]
	\arrow[shift right=1, from=1-4, to=1-3]
\end{tikzcd}\]
This example is in fact what we call an \emph{isoplex}, which we define in Section \ref{sec:isohornchar} to be any widening of a standard $n$-simplex $\Delta[n]$ at a single vertex.
\end{ex}

\begin{rmk}
Given a small category $C$ and some set of objects $\nu$, the widening $W_\nu\left(N(C)\right)$ is isomorphic to $N\left(C\sqcup_{\nu} \left(\coprod \mathbb{I}\right)\right)$, i.e., the nerve of the pushout of $C$ along $\mathbb{I}$ at each point in $\nu$.
\end{rmk}

Now that we have defined widenings, we can define \emph{widened inclusions}.

\begin{defn}
Given an inclusion of simplicial sets $X\cin Y$ and a subset $\nu\cin Y_0$, we say that the inclusion $(\{0\}\times Y)\cup W_{\nu\cap X_0}(X)\hookrightarrow W_{\nu}(Y)$ is a \emph{widened inclusion}. More specifically, we refer to it as \emph{the inclusion $X\hookrightarrow Y$ widened at $\nu$}. We denote the class of all widened inclusions by $\Wide$.
\end{defn}

\begin{ex}\label{ex:pushoutprodiswidenedinclusion}
Given $X\cin Y$, the pushout-product
\[
(\{0\}\hookrightarrow J)\boxprod (X\hookrightarrow Y)
\]
is precisely the widened inclusion $(\{0\}\times Y)\cup W_{X_0} (X)\hookrightarrow W_{Y_0}(Y)$.
\end{ex}

\begin{ex}\label{ex:isohorninclusioniswidened}
The widened inclusion
\[
(\{0\}\times \Delta[1])\cup W_0(\partial \Delta[1])\hookrightarrow W_{0}(\Delta[1])
\]
looks like
\[\begin{tikzcd}
	0 &&&& 0 \\
	& 1 & {} & {} && 1 \\
	2 &&&& 2 & {\ .}
	\arrow[from=1-1, to=3-1]
	\arrow[shift right=1, from=1-1, to=2-2]
	\arrow[hook, from=2-3, to=2-4]
	\arrow[shift right=1, from=2-2, to=1-1]
	\arrow[shift right=1, from=2-6, to=1-5]
	\arrow[shift right=1, from=1-5, to=2-6]
	\arrow[shift left=1, from=1-5, to=3-5]
	\arrow[from=2-6, to=3-5]
\end{tikzcd}\]
Similarly to Example \ref{ex:isoplexiswidening}, the widening $W_0(\Delta[1])$ is an isoplex. The simplicial set $(\{0\}\times \Delta[1])\cup W_0(\partial \Delta[1])$ is a union of all but one of the faces of this isoplex, and hence is an \emph{iso-horn}, making this widened inclusion an \emph{iso-horn inclusion}. We define these terms explictly in Section \ref{sec:isohornchar}.
\end{ex}

In order to make the two main observations of this section, we need to define a sort of partial projection map, as illustrated in the following example.

\begin{ex}\label{ex:partialproj}
There is a map $J\times \Delta[2]\to W_2(\Delta[2])$ which collapses the parts of $J\times \Delta[2]$ whose vertices are not in $J\times \{2\}$.
\[
\adjustbox{scale=0.7}{
\begin{tikzcd}
	&& {(0,1)} &&&&&&&& {(0,1)} \\
	{(0,0)} &&&& {(0,2)} &&&& {(0,0)} &&&& {(0,2)} \\
	&& {(1,1)} &&& {} && {} \\
	{(1,0)} &&&& {(1,2)} &&&&&&&& {(1,2)\nospace{.}}
	\arrow[from=2-9, to=1-11]
	\arrow[from=1-11, to=2-13]
	\arrow[from=1-11, to=4-13]
	\arrow[shift right=1, from=2-13, to=4-13]
	\arrow[shift right=1, from=4-13, to=2-13]
	\arrow[from=2-9, to=2-13]
	\arrow[from=2-9, to=4-13]
	\arrow[from=3-6, to=3-8]
	\arrow[shift right=1, from=4-5, to=2-5]
	\arrow[shift right=1, from=2-5, to=4-5]
	\arrow[shift left=1, from=4-1, to=4-5]
	\arrow[shift right=1, from=2-1, to=4-1]
	\arrow[from=2-1, to=1-3]
	\arrow[from=1-3, to=2-5]
	\arrow[shift right=1, from=4-1, to=3-3]
	\arrow[from=3-3, to=4-5]
	\arrow[shift right=1, from=1-3, to=3-3]
	\arrow[shift right=1, from=3-3, to=1-3]
	\arrow[shift right=1, from=4-1, to=2-1]
	\arrow[from=2-1, to=2-5, crossing over]
\end{tikzcd}
}
\]
\end{ex}

We can define partial projection maps more precisely via the following definition.

\begin{defn}
Given a simplicial set $X$ and a set of vertices $\nu\cin X_0$, we define a map $r_\nu\colon J\times X\to J$ by
\[
r_\nu ((a_0,a_1,\ldots,a_n),\sigma)=(a_0',a_1',\ldots,a_n'),
\]
where $a_i'=a_i$ if the $i$th vertex of $\sigma$ is in $\nu$ and $a_i'=0$ otherwise.
\end{defn}

Let $p_x\colon J\times X\to X$ denote the projection map. Then the image of the map
\[
(r_\nu, p_X)\colon J\times X\to J\times X
\]
is precisely $W_\nu (X)$. We denote the map $(r_\nu, p_X)$ with codomain restricted to $W_\nu (X)$ by $R_{\nu,X}$, so that we have a retract diagram
\[\begin{tikzcd}
	{W_\nu(X)} & {J\times X} & {W_\nu(X)\nospace{.}}
	\arrow[hook, from=1-1, to=1-2]
	\arrow["{R_{\nu,X}}", from=1-2, to=1-3]
\end{tikzcd}\]

The map $R_{\nu,X}$ is what we view as a sort of partial projection map, in the sense that it only collapses the parts of $X$ whose vertices are not in $\nu$.

\begin{ex}
The map in Example \ref{ex:partialproj} is precisely $R_{2,\Delta[2]}$.
\end{ex}

We can now see that widened inclusions are retracts of pushout-products.

\begin{prop}\label{prop:widenedinclusionsareretracts}
Given $X\hookrightarrow Y$ and $\nu\cin Y_0$, the widened inclusion
\[
(\{0\}\times Y)\cup W_{\nu\cap X_0} (X)\hookrightarrow W_{X_0}(Y)
\]
is a retract of the pushout-product
\[
(\{0\}\hookrightarrow J)\boxprod (X\hookrightarrow Y),
\]
and the class $\ol{\Wide}$ is equal to $\ol{(\{0\}\hookrightarrow J)\boxprod \Mono}$.
\end{prop}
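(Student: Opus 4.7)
The plan is to exhibit the retract diagram directly, using the partial projection map $R_{\nu,Y}$ constructed just before the proposition. Writing $A=(\{0\}\times Y)\cup W_{\nu\cap X_0}(X)$, $C=W_\nu(Y)$, $B=(\{0\}\times Y)\cup(J\times X)$, and $D=J\times Y$, the widened inclusion is the map $A\hookrightarrow C$ and the pushout-product $(\{0\}\hookrightarrow J)\boxprod (X\hookrightarrow Y)$ is the map $B\hookrightarrow D$. There are evident inclusions $A\hookrightarrow B$ and $C\hookrightarrow D$ coming from the containments $W_{\nu\cap X_0}(X)\cin J\times X$ and $W_\nu(Y)\cin J\times Y$, yielding a commutative square with the widened inclusion on the left and the pushout-product on the right.

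For the retraction half of the diagram I would use $R_{\nu,Y}\colon D\to C$ as the bottom horizontal map and its restriction to $B$ as the top horizontal map. The main verification is that this restriction actually lands in $A$. On $\{0\}\times Y$ the map $R_{\nu,Y}$ is the identity, so this part of $B$ is sent into $\{0\}\times Y\cin A$. On $J\times X$, a simplex $((a_0,\ldots,a_n),\sigma)$ is sent to a simplex whose $i$-th $J$-coordinate is $a_i$ if the $i$-th vertex of $\sigma$ lies in $\nu$ and is $0$ otherwise. Since every vertex of $\sigma$ already lies in $X_0$, each resulting vertex lies in $(\{0\}\times X_0)\cup(\{1\}\times(\nu\cap X_0))$, which is precisely the vertex set of $W_{\nu\cap X_0}(X)$. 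The retract identities then follow because $R_{\nu,Y}$ restricts to the identity on all of $W_\nu(Y)$, and both $A$ and $C$ sit inside $W_\nu(Y)$.

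The second statement is then essentially formal. Example \ref{ex:pushoutprodiswidenedinclusion} already gives $(\{0\}\hookrightarrow J)\boxprod \Mono\cin \Wide$, and the retract result just sketched, combined with closure of saturated classes under retracts, gives $\Wide\cin \ol{(\{0\}\hookrightarrow J)\boxprod \Mono}$; taking saturated closures of both containments yields the claimed equality. The only real obstacle is the vertex bookkeeping in the middle paragraph: once one notes that simplices of $J\times X$ have all their $Y$-coordinates in $X_0$, the image of $R_{\nu,Y}$ sits inside the widened-inclusion domain automatically, and everything else reduces to recording the structure of $R_{\nu,Y}$ as a retraction of $W_\nu(Y)\hookrightarrow J\times Y$.
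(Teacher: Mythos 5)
Your proof is correct and takes essentially the same approach as the paper: the paper's retract diagram uses exactly the maps you describe, writing the top retraction as $\id_{\{0\}\times Y}\cup R_{\nu\cap X_0,X}$, which coincides with the restriction of $R_{\nu,Y}$ to $(\{0\}\times Y)\cup(J\times X)$ by the vertex bookkeeping you carry out. The deduction of the second statement from Example \ref{ex:pushoutprodiswidenedinclusion} together with closure under retracts is also the paper's argument.
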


\begin{proof}
Let $\nu'=\nu\cap X_0$. We can form the following retract diagram
\[\begin{tikzcd}
	{(\{0\}\times Y)\cup W_{\nu'}(X)} && {(\{0\}\times Y)\cup (J\times X)} && {(\{0\}\times Y)\cup W_{\nu'}(X)} \\
	{W_{\nu}(Y)} && {J\times Y} && {W_{\nu}(Y)\nospace{,}}
	\arrow[hook, from=1-1, to=2-1]
	\arrow[hook, from=1-5, to=2-5]
	\arrow["{R_{\nu,Y}}"', from=2-3, to=2-5]
	\arrow[hook, from=2-1, to=2-3]
	\arrow[hook, from=1-1, to=1-3]
	\arrow["{\id_{\{0\}\times Y}\cup R_{\nu',X}}", from=1-3, to=1-5]
	\arrow[hook, from=1-3, to=2-3]
\end{tikzcd}\]
proving the first statement. Since the first statement implies
\[
\ol{\Wide}\cin \ol{(\{0\}\hookrightarrow J)\boxprod \Mono},
\]
the second statement follows from observing that every map in $(\{0\}\hookrightarrow J)\boxprod \Mono$ is isomorphic to a widened inclusion as seen in Example \ref{ex:pushoutprodiswidenedinclusion}, so we have
\[
\ol{(\{0\}\hookrightarrow J)\boxprod \Mono}\cin \ol{\Wide}
\]
as well.
\end{proof}

Since $\mcS{}^{\lifts}=\ol{\mcS}{}^{\ \lifts}$ for any class of maps $\mcS$, as an immediate consequence of this proposition we get a slightly stronger characterization of the fibrant objects in the minimal model structure.

\begin{cor}
A simplicial set is fibrant in the minimal model structure if and only if it has lifts with respect to all widened inclusions.
\end{cor}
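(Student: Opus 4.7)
The plan is to combine Proposition \ref{prop:widenedinclusionsareretracts} with Lemma \ref{lem:equivofan1givespushoutprod} and Corollary \ref{cor:minimalMSexists}, using the standard fact that a class of maps and its saturated closure cut out the same right lifting class.

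First I would note that by Corollary \ref{cor:minimalMSexists}, a simplicial set $X$ is fibrant in the minimal model structure if and only if $\mcA \lifts (X\to \ast)$, where $\mcA = (\{0\}\hookrightarrow J)\boxprod \Bdry$. Since right lifting against a class is the same as right lifting against its saturated closure, this is equivalent to $\ol{\mcA}\lifts (X\to \ast)$.

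Next, I would chain together the two equalities of saturated classes available to me. By Lemma \ref{lem:equivofan1givespushoutprod}, we have $\ol{\mcA} = \ol{(\{0\}\hookrightarrow J)\boxprod \Bdry} = \ol{(\{0\}\hookrightarrow J)\boxprod \Mono}$, and by Proposition \ref{prop:widenedinclusionsareretracts} this in turn equals $\ol{\Wide}$. Therefore
\[
\mcA^{\lifts} = \ol{\mcA}{}^{\ \lifts} = \ol{\Wide}{}^{\ \lifts} = \Wide^{\lifts},
\]
and applying this to the map $X\to \ast$ yields the claimed characterization.

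There is no real obstacle here, since every component has been established: Proposition \ref{prop:widenedinclusionsareretracts} did the conceptual work of identifying $\ol{\Wide}$ with $\ol{(\{0\}\hookrightarrow J)\boxprod \Mono}$, Lemma \ref{lem:equivofan1givespushoutprod} bridges to the generating set $\mcA$, and Corollary \ref{cor:minimalMSexists} provides the characterization of fibrancy. The corollary is really just the statement that rephrasing fibrancy through the larger but equivalent (up to saturation) class $\Wide$ loses no information.
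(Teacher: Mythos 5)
Your proposal is correct and follows essentially the same route as the paper, which deduces the corollary immediately from Proposition \ref{prop:widenedinclusionsareretracts} together with the identity $\mcS^{\lifts}=\ol{\mcS}{}^{\ \lifts}$, implicitly invoking Lemma \ref{lem:equivofan1givespushoutprod} and the characterization of fibrancy via $\mcA$ from Corollary \ref{cor:minimalMSexists}. Your write-up simply makes that chain of saturated-class equalities explicit.
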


By viewing maps out of $W_{\nu}X$ as homotopies of maps out of $X$ which are fixed on the full subcomplex of $X_0\smallsetminus \nu$, we can interpret this characterization as a slightly stronger version of the homotopy extension property that we saw in Paragraph \ref{par:htpyextension}.

Taken on its own, Proposition \ref{prop:widenedinclusionsareretracts} is not especially surprising, as it follows from the known characterization of the fibrant objects in the minimal model structure. However, as we see in Section \ref{sec:isohornchar}, the concept of widened inclusions allows for a more conceptual proof of an otherwise technical result, so it is worth spelling out how the class of all widened inclusions fits into the picture.

We now turn to proving the final proposition of this section, which is a key piece of the argument in Section \ref{sec:isohornchar}. First, we need the following lemma. The idea of the lemma is that, to get a widening at some set of vertices $\mu$, we can widen first at some subset $\nu\cin \mu$ and then widen at $\mu\smallsetminus \nu$. (Technically, when taking the second widening our set of vertices is $\{0\}\times(\mu\smallsetminus\nu)$, but we slightly abuse notation and simply write $\mu\smallsetminus \nu$.)

\begin{lem}
Given a simplicial set $X$ and sets of vertices $\nu\cin \mu\cin X_0$, there is an isomorphism
\[
W_{\mu}(X)\to W_{\mu\smallsetminus \nu}(W_{\nu}(X)).
\]
\end{lem}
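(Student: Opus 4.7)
The plan is to construct the isomorphism explicitly via coordinatewise maximum on the two copies of $J$. Since each simplex of $J$ is uniquely encoded by a tuple in $\{0,1\}^{n+1}$ (Subsection \ref{sub:nerve}), the assignment $\bigl((a_i)_i,(b_i)_i\bigr)\mapsto (\max(a_i,b_i))_i$ is a well-defined simplicial map $J\times J\to J$ (it commutes with face and degeneracy maps because $\max$ is applied coordinatewise). Taking the product with $\id_X$ gives a map $m\colon J\times J\times X\to J\times X$ sending $(a,b,\sigma)$ to $(\max(a,b),\sigma)$.

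View $W_{\mu\smallsetminus\nu}(W_\nu(X))$ inside $J\times J\times X$ via the inclusion $W_\nu(X)\hookrightarrow J\times X$, using the abuse of notation that identifies $\mu\smallsetminus\nu\cin X_0$ with $\{0\}\times(\mu\smallsetminus\nu)\cin W_\nu(X)_0$. Its vertices then come in exactly three types: $(0,0,x)$ for $x\in X_0$, $(0,1,x)$ for $x\in\nu$, and $(1,0,x)$ for $x\in\mu\smallsetminus\nu$. In each case $(\max(a,b),x)$ is a vertex of $W_\mu(X)\cin J\times X$, since $\nu\cin\mu$ and $\mu\smallsetminus\nu\cin\mu$. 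Because $W_\mu(X)$ is a full subcomplex of $J\times X$, this vertex-level observation is enough to conclude that $m$ restricts to a simplicial map $\bar m\colon W_{\mu\smallsetminus\nu}(W_\nu(X))\to W_\mu(X)$.

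For the inverse, I would define $\phi\colon W_\mu(X)\to W_{\mu\smallsetminus\nu}(W_\nu(X))$ on an $n$-simplex $(c,\sigma)$ by $\phi(c,\sigma)=(a,b,\sigma)$, with $(a_i,b_i)$ determined coordinatewise by the $i$th vertex: $(0,0)$ if $c_i=0$; $(0,1)$ if $c_i=1$ and $\sigma_i\in\nu$; and $(1,0)$ if $c_i=1$ and $\sigma_i\notin\nu$ (necessarily then $\sigma_i\in\mu\smallsetminus\nu$, since $(c,\sigma)$ lies in $W_\mu(X)$). This rule depends only on $c$ and on the vertices of $\sigma$ coordinatewise, so $\phi$ respects face and degeneracy maps, and a quick case check confirms that its image lies in $W_{\mu\smallsetminus\nu}(W_\nu(X))$. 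A direct comparison then shows $\bar m\circ\phi=\id$ and $\phi\circ\bar m=\id$; the apparently missing case $(a_i,b_i)=(1,1)$ cannot occur among vertices of $W_{\mu\smallsetminus\nu}(W_\nu(X))$, since that would force both $\sigma_i\in\nu$ (from $b_i=1$) and $(b_i,\sigma_i)\in\{0\}\times(\mu\smallsetminus\nu)$ (from $a_i=1$), a contradiction.

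I do not anticipate a serious obstacle here; the content is essentially bookkeeping. The main care needed is to be consistent about the abuse of notation and to correctly identify the several ambient simplicial sets --- $J\times X$, $J\times W_\nu(X)$, and $J\times J\times X$ --- in which the various widenings naturally sit.
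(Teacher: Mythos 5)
Your proof is correct and is essentially the paper's argument: your map $\phi$ is exactly the paper's composite $R_{\mu\smallsetminus\nu,W_\nu(X)}\circ(\id_J\times R_{\nu,X})\circ(\operatorname{diag}\times\id_X)$ restricted to $W_\mu(X)$, written out coordinatewise. The only (cosmetic) difference is that you verify bijectivity by exhibiting the coordinatewise-max map as an explicit two-sided inverse, whereas the paper checks injectivity and surjectivity of $\phi'$ directly.
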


\begin{proof}
Let $\phi$ be the composite map
\[\begin{tikzcd}
	{J\times X} && {J\times J\times X} && {J\times W_{\nu}(X)} && {W_{\mu\smallsetminus\nu}(W_{\nu}(X))\nospace{,}}
	\arrow["{\operatorname{diag}\times \id_X}", from=1-1, to=1-3]
	\arrow["{\id_{J}\times R_{\nu,X}}", from=1-3, to=1-5]
	\arrow["{R_{\mu\smallsetminus\nu,W_{\nu}(X)}}", from=1-5, to=1-7]
\end{tikzcd}\]
so that
\[
\phi(a,\sigma)=(r_{\mu\smallsetminus\nu}(a,\sigma),(r_{\nu}(a,\sigma),\sigma)).
\]
Let $\phi'$ denote the restriction of $\phi$ to $W_{\mu}(X)$. We want to show that $\phi'$ is bijective.

To show injectivity of $\phi'$, take $\alpha=((a_0,a_1,\ldots,a_n),\sigma)$ and $\beta=((b_0,b_1,\ldots,b_n),\sigma')$ such that $\phi'(\alpha)=\phi'(\beta)$. Since the third coordinate is given by projection onto $X$, it is immediate that $\sigma=\sigma'$, so it only remains to show that $a_i=b_i$ for each $0\leq i\leq n$. If the $i$th vertex of $\sigma$ is not in $\mu$, then $a_i=b_i=0$ because $\alpha$ and $\beta$ are in $W_{\mu}(X)$. If the $i$th vertex of $\sigma$ is in $\mu\smallsetminus \nu$, then the $i$th coordinate of $r_{\mu\smallsetminus\nu}(\alpha)$ is $a_i$ and the $i$th coordinate of $r_{\mu\smallsetminus\nu}(\beta)$ is $b_i$, so they must be equal. A similar argument applies for $r_{\nu}$ in the remaining case that the $i$th vertex of $\sigma$ is in $\nu$.

To show surjectivity of $\phi'$, take $\gamma=((b_0,\ldots,b_n),((c_0,\ldots,c_n),\sigma))$ in $W_{\mu\smallsetminus\nu}(W_{\nu}(X))$. We define $(a_0,\ldots,a_n)$ by letting $a_i=b_i$ if the $i$th vertex of $\sigma$ is in $\mu\smallsetminus\nu$, letting $a_i=c_i$ if the $i$th vertex of $\sigma$ is in $\nu$, and letting $a_i=0$ if the $i$th vertex of $\sigma$ is not in $\mu$. Then $((a_0,\ldots,a_n),\sigma)$ is in $W_{\mu}(X)$ and is sent to $\gamma$ by $\phi$.
\end{proof}

Using this lemma, we get the following proposition, which says that widened inclusions are built out of simpler widened inclusions.

\begin{prop}\label{prop:factoringwidenedinclusions}
Given an inclusion $X\hookrightarrow Y$ and sets of vertices $\nu\cin \mu\cin Y_0$, the widened inclusion $Y\cup W_{\mu\cap X_0}(X)\hookrightarrow W_\mu(Y)$ is the composite of a pushout of an inclusion widened at $\nu$ followed by a map isomorphic to an inclusion widened at $\mu\smallsetminus\nu$.
\end{prop}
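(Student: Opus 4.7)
The plan is to produce an explicit intermediate simplicial set between $Y\cup W_{\mu\cap X_0}(X)$ and $W_{\mu}(Y)$, namely
\[
A:=Y\cup W_{\mu\cap X_0}(X)\cup W_{\nu}(Y),
\]
viewed as a subcomplex of $W_{\mu}(Y)$, and to factor the widened inclusion as
\[
Y\cup W_{\mu\cap X_0}(X)\hookrightarrow A\hookrightarrow W_{\mu}(Y).
\]
The first inclusion will be identified as a pushout of the inclusion $X\hookrightarrow Y$ widened at $\nu$, and the second, under the isomorphism from the preceding lemma, as the inclusion $W_{\nu}(X)\hookrightarrow W_{\nu}(Y)$ widened at $\mu\smallsetminus\nu$.

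For the first factor, I would show that the square
\[
\begin{tikzcd}
Y\cup W_{\nu\cap X_0}(X) \arrow[r, hook] \arrow[d, hook] & W_{\nu}(Y) \arrow[d, hook]\\
Y\cup W_{\mu\cap X_0}(X) \arrow[r, hook] & A
\end{tikzcd}
\]
is a pushout by verifying the intersection identity
\[
W_{\nu}(Y)\cap\left[Y\cup W_{\mu\cap X_0}(X)\right]=Y\cup W_{\nu\cap X_0}(X)
\]
inside $W_{\mu}(Y)$. A simplex $(b,\sigma)$ in the intersection satisfies $b_i=1\Rightarrow \sigma_i\in\nu$ and either $b=0$ with $\sigma\in Y$, or $\sigma\in X$ with $b_i=1\Rightarrow \sigma_i\in\mu\cap X_0$; combining these and using $\nu\subseteq\mu$ drops us into the right-hand side. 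Since the top map of the square is exactly the inclusion $X\hookrightarrow Y$ widened at $\nu$, the left vertical map is a pushout of such an inclusion.

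For the second factor, I would invoke the preceding lemma to obtain the isomorphism $\phi'\colon W_{\mu}(Y)\xrightarrow{\cong} W_{\mu\smallsetminus\nu}(W_{\nu}(Y))$ given by
\[
\phi'(a,\sigma)=\left(r_{\mu\smallsetminus\nu}(a,\sigma),\,(r_{\nu}(a,\sigma),\sigma)\right),
\]
and interpret $\mu\smallsetminus\nu\subseteq Y_0$ as the subset $\{0\}\times(\mu\smallsetminus\nu)$ of $W_{\nu}(Y)_0$. I would then show
\[
\phi'(A)=\left(\{0\}\times W_{\nu}(Y)\right)\cup W_{(\mu\smallsetminus\nu)\cap W_{\nu}(X)_0}\left(W_{\nu}(X)\right),
\]
which by definition is the inclusion $W_{\nu}(X)\hookrightarrow W_{\nu}(Y)$ widened at $\mu\smallsetminus\nu$. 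The forward containment is checked by applying $\phi'$ piece-by-piece: on $Y$ the image is $\{0\}\times\{0\}\times Y$; on $W_{\nu}(Y)$, the condition $b_i=1\Rightarrow \sigma_i\in\nu$ forces $r_{\mu\smallsetminus\nu}(b,\sigma)=0$, giving image in $\{0\}\times W_{\nu}(Y)$; and on $W_{\mu\cap X_0}(X)$, any vertex with $r_{\mu\smallsetminus\nu}$-coordinate $1$ has the corresponding $\sigma_i\in(\mu\smallsetminus\nu)\cap X_0$, landing in the widening of $W_{\nu}(X)$. For the reverse containment, I would use the bijectivity of $\phi'$ from the lemma and run the same case analysis on its inverse to show every simplex of the target pulls back into one of the three pieces of $A$.

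The main obstacle is the combinatorial bookkeeping in the second step: for each vertex of a simplex $(b,\sigma)$ one has to keep track of whether $\sigma_i$ lies in $\nu$, in $\mu\smallsetminus\nu$, or outside $\mu$, and, within those cases, whether it lies in $X_0$, in order to confirm that $\phi'$ sends $W_{\mu\cap X_0}(X)$ into the widening of $W_{\nu}(X)$ (not merely of $W_{\nu}(Y)$) and that the image fills out the target exactly. Once the case split is set up, each case reduces to a short unwinding of the definitions of $r_{\mu\smallsetminus\nu}$ and $r_{\nu}$.
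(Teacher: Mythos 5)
Your proposal is correct and follows essentially the same route as the paper: the intermediate object $A=Y\cup W_{\mu\cap X_0}(X)\cup W_{\nu}(Y)$ coincides with the paper's $W_{\nu}(Y)\cup W_{\mu\cap X_0}(X)$, the first factor is exhibited as a pushout of the inclusion widened at $\nu$ via the same union-intersection square, and the second factor is transported along the isomorphism $W_{\mu}(Y)\cong W_{\mu\smallsetminus\nu}(W_{\nu}(Y))$ exactly as in the paper (your verifications just make explicit what the paper leaves to a diagram). The only nitpick is notational: the subcomplex you widen in the second step should be written $W_{\nu\cap X_0}(X)$ rather than $W_{\nu}(X)$, since $\nu$ need not lie in $X_0$.
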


\begin{proof}
Let $\nu'=\nu\cap X_0$ and $\mu'=\mu\cap X_0$. The diagram
\[
\adjustbox{scale=0.8}{
\begin{tikzcd}
	{Y\cup W_{\mu'}(X)} && {W_\mu(Y)} & {W_{\mu\smallsetminus\nu}(W_\nu(Y))} \\
	{(Y\cup W_{\nu}(X))\cup W_\mu'(X)} \\
	& {W_{\nu}(Y)\cup W_{\mu'}(X)} & {W_{\nu}(Y)\cup W_{\mu'\smallsetminus \nu'}(W_{\nu'}(X))}
	\arrow[equal, from=1-1, to=2-1]
	\arrow["\cong"', from=3-2, to=3-3]
	\arrow["\cong", from=1-3, to=1-4]
	\arrow["{(a)}"', hook, from=2-1, to=3-2]
	\arrow[hook, from=3-3, to=1-4]
	\arrow[hook, from=1-1, to=1-3]
	\arrow["{(b)}"', hook, from=3-2, to=1-3]
\end{tikzcd}
}
\]
shows how $Y\cup W_{\mu'}(X)\hookrightarrow W_{\mu}(Y)$ factors as the map $(a)$, which is a pushout of the inclusion $X\hookrightarrow Y$ widened at $\nu$, followed by the map $(b)$, which is isomorphic to the inclusion $W_{\nu'}(X)\hookrightarrow W_{\nu}(Y)$ widened at $\mu\smallsetminus\nu$.
\end{proof}

Let $\Wide^{(1)}$ denote the class of inclusions widened at a single vertex $\nu=\{v\}$. By iterated application of Proposition \ref{prop:factoringwidenedinclusions}, we can build any widened inclusion out of maps in $\Wide^{(1)}$, hence the following corollary.

\begin{cor}\label{cor:factoringwidenedinclusions}
The class $\Wide$ is contained in $\ol{\Wide^{(1)}}$.
\end{cor}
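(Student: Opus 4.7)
The plan is to prove the corollary by (transfinite) induction on the cardinality of the widening set $\mu$, with Proposition~\ref{prop:factoringwidenedinclusions} doing the heavy lifting: it lets us peel off a single vertex at a time.

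First I would handle the base cases. When $|\mu|=0$, the source $Y\cup W_{\varnothing\cap X_0}(X)=\{0\}\times Y$ agrees with the target $W_\varnothing(Y)=\{0\}\times Y$, so the widened inclusion is an identity and lies in every saturated class. When $|\mu|=1$, the map is in $\Wide^{(1)}$ by definition.

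For the inductive step with $|\mu|$ finite and at least $2$, pick any $v\in \mu$ and apply Proposition~\ref{prop:factoringwidenedinclusions} with $\nu=\{v\}$. This writes the widened inclusion $Y\cup W_{\mu\cap X_0}(X)\hookrightarrow W_\mu(Y)$ as the composite of (a) a pushout of an inclusion widened at the single vertex $\{v\}$, which is in $\Wide^{(1)}\subseteq\ol{\Wide^{(1)}}$ (using closure of $\ol{\Wide^{(1)}}$ under pushouts), followed by (b) a map isomorphic to the inclusion $W_{\{v\}\cap X_0}(X)\hookrightarrow W_{\{v\}}(Y)$ widened at $\mu\smallsetminus\{v\}$, a widened inclusion whose widening set has strictly smaller size, hence in $\ol{\Wide^{(1)}}$ by induction. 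Closure of $\ol{\Wide^{(1)}}$ under (finite) composition yields the claim.

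The main obstacle is the infinite case, since there is no well-founded ``$|\mu|-1$'' to induct down on. I would address this by a transfinite composition argument: well-order $\mu=\{v_\alpha:\alpha<\lambda\}$, set $\mu_\alpha=\{v_\beta:\beta<\alpha\}$, and consider the filtration of $W_\mu(Y)$ by the subcomplexes
\[
Z_\alpha \;=\; W_{\mu_\alpha}(Y)\,\cup\, W_{\mu\cap X_0}(X),\qquad 0\le \alpha\le \lambda,
\]
so that $Z_0$ is the source and $Z_\lambda$ is the target of our widened inclusion. At limit stages, $Z_\alpha=\colim_{\beta<\alpha}Z_\beta$ directly from the construction, so it suffices to show that each successor map $Z_\alpha\hookrightarrow Z_{\alpha+1}$ lies in $\ol{\Wide^{(1)}}$. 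I would identify this map, by an argument parallel to the proof of Proposition~\ref{prop:factoringwidenedinclusions}, as a pushout of an inclusion widened at the single vertex $\{v_\alpha\}$ (the base inclusion being between appropriate subcomplexes of $Y$ whose vertex sets track which simplices of $Y$ containing $v_\alpha$ are already present in $Z_\alpha$ through $W_{\mu\cap X_0}(X)$). The care is exactly this bookkeeping; if it becomes awkward, the cleanest fallback is to iterate Proposition~\ref{prop:factoringwidenedinclusions} directly over $\alpha<\lambda$, repeatedly splitting off the next vertex and tracking the base inclusion $W_{\mu_\alpha\cap X_0}(X)\hookrightarrow W_{\mu_\alpha}(Y)$ through the iteration. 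In either formulation, closure of $\ol{\Wide^{(1)}}$ under pushouts and transfinite compositions delivers the inclusion $\Wide\subseteq\ol{\Wide^{(1)}}$.
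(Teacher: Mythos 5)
Your proposal is correct and follows the same route as the paper, which simply says the corollary follows ``by iterated application of Proposition~\ref{prop:factoringwidenedinclusions}''; you are peeling off one vertex at a time with $\nu=\{v\}$ exactly as intended. Your explicit treatment of the transfinite case (well-ordering $\mu$ and using closure under transfinite composition) fills in a detail the paper leaves implicit, but it is the same argument, not a different one.
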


Now that we have some basic facts about widened inclusions, we are ready to apply them to prove our main result in the next section.

\section{A characterization of the fibrant objects in terms of iso-horn inclusions}\label{sec:isohornchar}

In this section, we define iso-horn inclusions and show that they generate the same saturated class as $(\{0\}\hookrightarrow J)\boxprod \Bdry$, hence showing that the fibrant objects in the minimal model structure are precisely the simplicial sets with iso-horn extensions. As mentioned already in Example \ref{ex:isohorninclusioniswidened}, each iso-horn inclusion is an example of a widened inclusion. We therefore have the arrow $(a)$ in the diagram
\[\begin{tikzcd}[row sep=tiny]
	& {\left(\{0\}\hookrightarrow J\right)\boxprod \Bdry} \\
	{\left(\{0\}\hookrightarrow J\right)\boxprod \Mono} && {\hphantom{\Wide_{\operatorname{nar}}}\vphantom{{}_{\operatorname{nar}}}} \\
	{\vspace{2mm}}\\
	{\Wide} && {\hphantom{\Wide}\vphantom{{}^{(1)}}\nospace{,}}\\
	& {\IsoHorn}
	\arrow["{(a)}", from=5-2, to=4-1]
	\arrow["{(b)}", from=4-1, to=2-1]
	\arrow["{(c)}", from=2-1, to=1-2]
	\arrow[dotted, from=1-2, to=2-3]
	\arrow[dotted, from=2-3, to=4-3]
	\arrow[dotted, from=4-3, to=5-2]
\end{tikzcd}\]
where an arrow $\mcS\to \mcT$ indicates that $\mcS\cin \ol{\mcT}$. We get $(b)$ from Proposition \ref{prop:widenedinclusionsareretracts} and $(c)$ from Lemma \ref{lem:equivofan1givespushoutprod}. The goal of this section is to fill out the dotted arrows in this diagram to create a loop, implying that all of these classes generate the same saturated class.

We begin by introducing the concept of \emph{narrow vertices} of a simplicial set. At the end of this section we see that restricting to the special case of inclusions widened at sets of narrow vertices is helpful in the proof of Proposition \ref{prop:isohornskeletalfiltration}.

\begin{defn}
Given a simplicial set $X$, a vertex $v\in X_0$ is \emph{narrow} if, for every non-degenerate simplex $\sigma$ in $X$, at most one vertex of $\sigma$ is $v$. A set of vertices $\nu\cin X_0$ is \emph{narrow} if every vertex in $\nu$ is narrow. Denote by $\Wide_{\operatorname{nar}}$ the class of inclusions $X\hookrightarrow Y$ widened at a set of vertices which are narrow in $Y$.
\end{defn}

\begin{ex}
Each vertex of $J$ is not narrow because there exist non-degenerate 2-simplices $0\to 1\to 0$ and $1\to 0\to 1$.
\end{ex}

\begin{ex}\label{ex:pushoutproductsarenarrow}
For all $n\geq 0$, every vertex of $\Delta[n]$ is narrow. For $n\geq 1$, we saw in Example \ref{ex:pushoutprodiswidenedinclusion} that the pushout-product
\[
(\{0\}\hookrightarrow J)\boxprod (\partial\Delta[n]\hookrightarrow \Delta[n])
\]
is the widened inclusion
\[
\Delta[n]\cup W_{\Delta[n]_0}(\partial\Delta[n])\hookrightarrow W_{\Delta[n]_0}(\Delta[n]),
\]
hence
\[
(\{0\}\hookrightarrow J)\boxprod \Bdry \cin \Wide_{\operatorname{nar}}.
\]
\end{ex}

\begin{ex}
If $x$ is a narrow vertex of some simplicial set $Y$, then, for every $X\cin Y$ containing $x$, the vertex $x$ is also narrow in $X$.
\end{ex}

Now that we have defined narrow vertices, we note that in the factoring of a widened inclusion as in Proposition \ref{prop:factoringwidenedinclusions}, if our original set of vertices $\mu$ at which we are widening is narrow, then so are the subsets $\nu$ and $\mu\smallsetminus \nu$. (More precisely, if $\mu$ is narrow in $Y$ then $\nu$ is narrow in $Y$ and $\mu\smallsetminus \nu$ is narrow in $W_{\nu}(Y)$.) Thus, we get the following ``narrow'' version of Corollary \ref{cor:factoringwidenedinclusions}, where we denote by $\Wide_{\operatorname{nar}}^{(1)}$ the class of inclusions widened at a single narrow vertex.

\begin{prop}\label{prop:factoringnarrow}
The class $\Wide_{\operatorname{nar}}$ is contained in $\ol{\Wide_{\operatorname{nar}}^{(1)}}$.
\end{prop}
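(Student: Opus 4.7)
The plan is to mimic the proof of Corollary \ref{cor:factoringwidenedinclusions} (iterated application of Proposition \ref{prop:factoringwidenedinclusions}), adding one extra bookkeeping ingredient: verifying that narrowness is preserved at each step. I would proceed by induction on $|\mu|$, where $\mu\cin Y_0$ is the narrow set of vertices at which we widen some inclusion $X\hookrightarrow Y$. The base case $|\mu|\leq 1$ is trivial, since widening at $\varnothing$ is (up to canonical isomorphism) the identity and widening at a single narrow vertex already lies in $\Wide_{\operatorname{nar}}^{(1)}$ by definition.

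For the inductive step, pick any $v\in\mu$ and apply Proposition \ref{prop:factoringwidenedinclusions} with the decomposition $\mu=\nu\sqcup\{v\}$ where $\nu=\mu\smallsetminus\{v\}$. This factors $Y\cup W_{\mu\cap X_0}(X)\hookrightarrow W_{\mu}(Y)$ as a pushout of the inclusion $X\hookrightarrow Y$ widened at $\nu$, followed by a map isomorphic to an inclusion widened at $\{v\}$. For the first factor, $\nu$ is narrow in $Y$ (any subset of a narrow set is narrow), so by the inductive hypothesis the inclusion $X\hookrightarrow Y$ widened at $\nu$ is in $\ol{\Wide_{\operatorname{nar}}^{(1)}}$, and hence so is any pushout of it. For the second factor, I need to confirm that the singleton vertex at which we are widening is narrow in the relevant simplicial set $W_{\nu}(Y)$; this is the key narrowness-propagation check.

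The main (and only real) obstacle is this narrowness check: showing that if $v\in Y_0$ is narrow in $Y$, then the corresponding vertex $(0,v)$ of $W_{\nu}(Y)$ is narrow in $W_\nu(Y)$. This reduces to a direct simplex-counting argument. Any simplex of $W_{\nu}(Y)\cin J\times Y$ has the form $\tau=((a_0,\dots,a_n),\sigma)$ for some simplex $\sigma$ of $Y$, and two vertices of $\tau$ equal $(0,v)$ only if the corresponding two vertices of $\sigma$ both equal $v$, which is ruled out by narrowness of $v$ in $Y$. Hence $(0,v)$ is narrow in $W_\nu(Y)$, and the second factor lies in $\Wide_{\operatorname{nar}}^{(1)}$.

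Combining, both factors are in $\ol{\Wide_{\operatorname{nar}}^{(1)}}$, so their composite is too, completing the induction. Since every element of $\Wide_{\operatorname{nar}}$ is of the form handled by the induction, we conclude $\Wide_{\operatorname{nar}}\cin \ol{\Wide_{\operatorname{nar}}^{(1)}}$, as desired.
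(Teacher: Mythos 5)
Your proposal is correct and follows essentially the same route as the paper: the paper likewise obtains this proposition by iterating Proposition \ref{prop:factoringwidenedinclusions} and observing that narrowness propagates (if $\mu$ is narrow in $Y$ then $\nu$ is narrow in $Y$ and $\mu\smallsetminus\nu$ is narrow in $W_\nu(Y)$), which is exactly the simplex-counting check you spell out for $(0,v)$ in $W_\nu(Y)$. The only cosmetic difference is that you peel off the single vertex as the second factor rather than the first, which changes nothing of substance.
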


We define iso-horn inclusions to be particular maps in $\Wide_{\operatorname{nar}}^{(1)}$.

\begin{defn}\label{def:isohorns}
Fix $n\geq 1$ and $0\leq i\leq n-1$. We let $\tri_i[n]=W_i(\Delta[n-1])$, which we call an \emph{isoplex}, and let $\bV_i[n]=W_i(\partial\Delta[n-1])\cup \Delta[n-1]$ and call it an \emph{iso-horn}. We call the inclusion $\bV_i[n]\hookrightarrow \tri_i[n]$ an \emph{iso-horn inclusion}. Denote by $\IsoHorn$ the set of all iso-horn inclusions.
\end{defn}

Recall that the standard $n$-simplex $\Delta[n]$ is the nerve of the category $[n]$. We have chosen the terminology ``isoplex'' because $\tri_i[n]$ is the nerve of the category we denote by $[n]_i$, which is $[n]$ with the edge $i\to i+1$ inverted. Continuing this analogy, we can view a face of $\tri_i[n]$ as the result of deleting a single vertex, in which case the iso-horn $\bV_i[n]$ is the union of all of the faces of $\tri_i[n]$ except for the $i$th face. Let us make this terminology more precise.

\begin{defn}
For $0\leq j\leq n$, viewing $\tri_i[n]$ as the nerve of the category $[n]_i$
\[\begin{tikzcd}[column sep=small]
	0 & \ldots & {i-1} & i & {i+1} & {i+2} & \ldots & n\nospace{,}
	\arrow[from=1-1, to=1-2]
	\arrow[from=1-2, to=1-3]
	\arrow[from=1-3, to=1-4]
	\arrow[shift right=1, from=1-4, to=1-5]
	\arrow[shift right=1, from=1-5, to=1-4]
	\arrow[from=1-5, to=1-6]
	\arrow[from=1-6, to=1-7]
	\arrow[from=1-7, to=1-8]
\end{tikzcd}\]
we let the $j$th face of the isoplex $\tri_i[n]$, denoted $d_j \tri_i[n]$, be the full subcomplex on all but the $j$th vertex.
\end{defn}

Observe that when $j=i$ or $i+1$, the $j$th face of $\tri_i[n]$ is an $(n-1)$-simplex $\Delta[n-1]$, and otherwise it is an $(n-1)$-isoplex. We can view an isoplex as an ``isomorphism  of $(n-1)$-simplices'' from the $i+1$ face to the $i$th face. Similarly, we can view an iso-horn $\bV_i[n]$ as an $(n-1)$-simplex extended by an isomorphism along its boundary.

We have already seen examples of isoplexes and iso-horn inclusions in Examples \ref{ex:isoplexiswidening} and \ref{ex:isohorninclusioniswidened}.

We have now defined the classes necessary to fill out our loop from earlier:
\[\begin{tikzcd}[row sep=tiny]
	& {\left(\{0\}\hookrightarrow J\right)\boxprod \Bdry} \\
	{\left(\{0\}\hookrightarrow J\right)\boxprod \Mono} && {\Wide_{\operatorname{nar}}} \\
	{\vspace{2mm}}\\
	{\Wide} && {\Wide_{\operatorname{nar}}^{(1)}\nospace{,}} \\
	& {\IsoHorn}
	\arrow["{(d)}", from=1-2, to=2-3]
	\arrow["{(e)}", from=2-3, to=4-3]
	\arrow["{(f)}", from=4-3, to=5-2]
	\arrow["{(a)}", from=5-2, to=4-1]
	\arrow["{(b)}", from=4-1, to=2-1]
	\arrow["{(c)}", from=2-1, to=1-2]
\end{tikzcd}\]
where again an arrow $\mcS\to \mcT$ indicates that $\mcS\cin \ol{\mcT}$. We get $(d)$ from Example \ref{ex:pushoutproductsarenarrow}, and $(e)$ from Proposition \ref{prop:factoringnarrow}, so it only remains to show $(f)$ in order to conclude that these classes all generate the same saturated class.

The iso-horn inclusions are the simplest possible examples of widened inclusions, since each iso-horn inclusion is a boundary inclusion widened at just one vertex. From this perspective, they are the fundamental building blocks of the class of widened inclusions, similar to how the boundary inclusions are the fundamental building blocks of the monomorphisms. In fact, our approach to proving $(f)$ is to adapt the standard proof that $\ol{\Bdry}=\Mono$.

The idea in the proposition below is that, for some inclusion $X\hookrightarrow Y$ and a narrow $y\in Y$, to build up the widened inclusion $Y\cup W_y(X)\hookrightarrow W_y(Y)$ we inductively widen the $k$-simplices of $Y$ which are not in $X$. That is, having widened the $(k-1)$-skeleton, for any non-degenerate $k$-simplex $\sigma$ in $Y\smallsetminus X$ with $y$ as its $i$th vertex, the boundary of $\sigma$ has already been widened, and so we have a map from the iso-horn $\tri_i[k+1]$ which sends the $i+1$ face to $\sigma$ and sends the other faces to the widening of the boundary of $\sigma$. We therefore widen $\sigma$ by taking the pushout of the iso-horn inclusion $\bV_i[k+1]\hookrightarrow \tri_i[k+1]$. We spell out this process more explicitly in the proof, for which we need the notation from the following definition.

\begin{defn}
Fix a simplicial set $Y$ and a narrow vertex $y\in Y_0$. Given $k\geq 1$ and a non-degenerate $k$-simplex $\sigma\colon \Delta[k]\to Y$ with $y$ as a vertex, let $0\leq i\leq k$ be the index such that $y$ is the $i$th vertex of $\sigma$. Then we let $W_y(\sigma)$ and $W_y(\partial\sigma)$ be the restrictions of the map $\id_J\times \sigma$ as in
\[\begin{tikzcd}
	{W_i(\partial\Delta[k])} & {W_y(\sk_{k-1}Y)} \\
	{\tri_i[k+1]} & {W_y(\sk_k Y)} \\
	{J\times\Delta[k]} & {J\times\sk_k Y\nospace{.}}
	\arrow["{W_y(\sigma)}", from=2-1, to=2-2]
	\arrow[hook, from=1-1, to=2-1]
	\arrow["{W_y(\partial\sigma)}", from=1-1, to=1-2]
	\arrow[hook, from=1-2, to=2-2]
	\arrow["{\id_J\times \sigma}"', from=3-1, to=3-2]
	\arrow[hook, from=2-2, to=3-2]
	\arrow[hook, from=2-1, to=3-1]
\end{tikzcd}\]
(Recall that $\sk_k X$ denotes the $k$-skeleton of $X$.)
\end{defn}

\begin{prop}\label{prop:isohornskeletalfiltration}
The class $\Wide^{(1)}_{\operatorname{nar}}$ is contained in $\ol{\IsoHorn}$.
\end{prop}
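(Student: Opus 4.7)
The plan is to imitate the standard skeleton-by-skeleton proof that $\ol{\Bdry} = \Mono$. Given an inclusion $X \hookrightarrow Y$ and a vertex $y \in Y_0$ that is narrow in $Y$, set $X' = W_{\{y\} \cap X_0}(X)$, so that the widened inclusion of interest is $Y \cup X' \hookrightarrow W_y(Y)$. I filter this inclusion by the dimension of attaching cells, setting
\[
Z_k = Y \cup X' \cup W_y(\sk_k Y) \qquad (k \geq -1),
\]
where $\sk_{-1} Y = \varnothing$, so that $Z_{-1} = Y \cup X'$ is the domain and $\bigcup_k Z_k = W_y(Y)$ is the codomain.

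The heart of the argument is to realize each step $Z_{k-1} \hookrightarrow Z_k$ as a pushout of a coproduct of iso-horn inclusions, indexed over the non-degenerate $k$-simplices $\sigma \colon \Delta[k] \to Y$ which are not in $X$ and which have $y$ as a vertex. For each such $\sigma$, let $i(\sigma) \in \{0, \ldots, k\}$ be the unique index with $\sigma(i(\sigma)) = y$; this is well-defined because $y$ is narrow in $Y$. Using the maps $W_y(\partial \sigma)$ and $W_y(\sigma)$ defined just above the proposition, I would form the square
\[
\begin{tikzcd}
\coprod_{\sigma} \bV_{i(\sigma)}[k+1] \arrow[d, hook] \arrow[r] & Z_{k-1} \arrow[d, hook] \\
\coprod_{\sigma} \tri_{i(\sigma)}[k+1] \arrow[r] & Z_k
\end{tikzcd}
\]
and verify it is a pushout. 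Once this is established, $Z_{-1} \hookrightarrow W_y(Y)$ is a transfinite composition of pushouts of coproducts of iso-horn inclusions, hence in $\ol{\IsoHorn}$.

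The main obstacle is verifying the pushout claim by careful bookkeeping on simplices. For surjectivity, any simplex of $Z_k$ not in $Z_{k-1}$ has the form $(a, \tau)$ where $\tau = \pi^{\ast}\sigma$ is a degeneracy of a non-degenerate $k$-simplex $\sigma$ of $Y$; its absence from $Z_{k-1}$ forces $\sigma \notin X$ (via the Eilenberg--Zilber lemma) and some entry $a_j = 1$, which in turn forces $\sigma(\pi(j)) = y$ by the defining vertex condition of $W_y$, so $\sigma$ contains $y$ as a vertex and $(a, \tau)$ lies in the image of the $\sigma$-component. For the pullback, a simplex $(a, \pi)$ of $\tri_{i(\sigma)}[k+1]$ maps into $Z_{k-1}$ if and only if either every entry of $a$ is $0$ (so the image lies in $\{0\} \times Y$) or $\pi$ factors through $\partial \Delta[k]$ (so the image lies in $W_y(\sk_{k-1} Y)$), and these two conditions describe exactly $\bV_{i(\sigma)}[k+1] = W_{i(\sigma)}(\partial \Delta[k]) \cup \Delta[k]$. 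Narrowness of $y$ is essential to make $i(\sigma)$ well-defined, while the Eilenberg--Zilber lemma further ensures that distinct non-degenerate $k$-simplices $\sigma$ contribute pairwise disjoint attaching cells to $Z_k \setminus Z_{k-1}$.
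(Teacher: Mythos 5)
Your proof is correct and follows essentially the same skeletal-filtration argument as the paper: filter $W_y(Y)$ by the widenings of the skeleta of $Y$ and realize each stage as a pushout of a coproduct of iso-horn inclusions indexed by the non-degenerate $k$-simplices containing $y$ and not in $X$, with narrowness guaranteeing the index $i(\sigma)$ is well-defined. The only (harmless) difference is that you start the filtration at $k=-1$ so the $k=0$ stage, a pushout of $\bV_0[1]\hookrightarrow\tri_0[1]=(\{0\}\hookrightarrow J)$, absorbs the case $y\notin X$, which the paper instead handles by a separate preliminary reduction to the case $y\in X$.
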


\begin{proof}
Fix an inclusion $X\hookrightarrow Y$ and a narrow vertex $y\in Y_0$. We would like to show that the inclusion $X\hookrightarrow Y$ widened at $y$ is a countable composition of pushouts of coproducts of iso-horn inclusions. There are two cases: either $y$ is in $X$ or it is not. We first argue that the case that $y$ is not in $X$ reduces to the other case.

If $y$ is not in $X$, then $X\hookrightarrow Y$ widened at $y$ is $Y\hookrightarrow W_y(Y)$, which we can factor as
\[\begin{tikzcd}
	Y & {Y\cup W_y(X\cup \{y\})} & {W_y(Y)\nospace{,}}
	\arrow[hook, from=1-1, to=1-2]
	\arrow[hook, from=1-2, to=1-3]
\end{tikzcd}\]
where the first inclusion is the same as $Y\hookrightarrow Y\cup W_y(\{y\})$ since $\{y\}$ is disjoint from $X$, and so is a pushout of $\{0\}\hookrightarrow J$ which is the iso-horn inclusion $\bV_0[1] \hookrightarrow \tri_0[1]$. Since the remaining inclusion is $(X\cup \{y\})\hookrightarrow Y$ widened at $y$, we have reduced to the other case.

If $y$ is in $X$, then $W_y(\sk_0 Y)\cin Y\cup W_y(X)$, so we may factor the widened inclusion $Y\cup W_y(X)\hookrightarrow W_y (Y)$ as
\[
\adjustbox{scale=1}{
\begin{tikzcd}[column sep=small]
	{Y\cup W_y(X)\cup W_y(\sk_0 Y)} & \ldots & {Y\cup W_y(X)\cup W_y(\sk_k Y)} & \ldots & {W_y(Y)\nospace{,}}
	\arrow[hook, from=1-1, to=1-2]
	\arrow[hook, from=1-2, to=1-3]
	\arrow[hook, from=1-3, to=1-4]
	\arrow[hook, from=1-4, to=1-5]
\end{tikzcd}
}
\]
so it suffices to show that the inclusion
\[\begin{tikzcd}[column sep=small]
	{Y\cup W_y(X)\cup W_y(\sk_{k-1} Y)} && {Y\cup W_y(X)\cup W_y(\sk_{k} Y)}
	\arrow[hook, from=1-1, to=1-3]
\end{tikzcd}\]
is a pushout of a coproduct of iso-horn inclusions for each $k\geq 1$, as in the diagram
\[\begin{tikzcd}
	{\coprod\limits_{\sigma\in\mcN_{k}} \bV_{i_\sigma}[k+1]} &&& {Y\cup W_y(X)\cup W_y(\sk_{k-1} Y)} \\
	\\
	{\coprod\limits_{\sigma\in\mcN_{k}}\tri_{i_\sigma}[k+1]} &&& {Y\cup W_y(X)\cup W_y(\sk_{k} Y)\nospace{,}} \\
	&& {}
	\arrow[hook, from=1-4, to=3-4]
	\arrow[hook, from=1-1, to=3-1]
	\arrow["{\coprod\limits_{\sigma\in\mcN_{k}} \sigma\cup W_y(\partial \sigma)}", from=1-1, to=1-4]
	\arrow["{\coprod\limits_{\sigma\in\mcN_{k}} W_y(\sigma)}"', from=3-1, to=3-4]
\end{tikzcd}\]
where $\mcN_{k}$ is the set of non-degenerate $k$-simplices of $Y$ which have $y$ as a vertex and are not in $X$, and where $i_\sigma$ is the index $0\leq i_\sigma\leq k$ such that $y$ is the $i_\sigma$th vertex of $\sigma$. 
\end{proof}

Having proved that $\Wide$ and $\IsoHorn$ generate the same saturated class as $\mcA=(\{0\}\hookrightarrow J)\boxprod \Bdry$, our main theorem is an immediate corollary.

\begin{thm}\label{thm:isohornminimalfibrant}
Given a simplicial set $X$, the following are equivalent.
\begin{enumerate}
    \item The simplicial set $X$ is fibrant in the minimal model structure.
    \item The map $X\to \ast$ has the right lifting property with respect to the class of widened inclusions.
    \item The map $X\to \ast$ has the right lifting property with respect to the set of iso-horn inclusions.
\end{enumerate}
\end{thm}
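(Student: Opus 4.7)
The plan is to package together the chain of class containments (the loop $(a)$ through $(f)$) that the paper has already assembled, observing that the right lifting property depends only on the saturated closure of a class. Since Corollary~\ref{cor:minimalMSexists} gives (1) $\iff$ $\mcA \lifts (X\to\ast)$, it suffices to prove that the three classes $\IsoHorn$, $\Wide$, and $\mcA = (\{0\}\hookrightarrow J)\boxprod \Bdry$ all generate the same saturated class.

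First I would explicitly trace the loop, reading off the already-established containments in order:
\begin{align*}
\IsoHorn &\cin \Wide &&\text{by Example~\ref{ex:isohorninclusioniswidened},}\\
\Wide &\cin \ol{(\{0\}\hookrightarrow J)\boxprod \Mono} &&\text{by Proposition~\ref{prop:widenedinclusionsareretracts},}\\
(\{0\}\hookrightarrow J)\boxprod \Mono &\cin \ol{(\{0\}\hookrightarrow J)\boxprod \Bdry} &&\text{by Lemma~\ref{lem:equivofan1givespushoutprod},}\\
(\{0\}\hookrightarrow J)\boxprod \Bdry &\cin \Wide_{\operatorname{nar}} &&\text{by Example~\ref{ex:pushoutproductsarenarrow},}\\
\Wide_{\operatorname{nar}} &\cin \ol{\Wide_{\operatorname{nar}}^{(1)}} &&\text{by Proposition~\ref{prop:factoringnarrow},}\\
\Wide_{\operatorname{nar}}^{(1)} &\cin \ol{\IsoHorn} &&\text{by Proposition~\ref{prop:isohornskeletalfiltration}.}
\end{align*}
Using the fact that $\mcS\cin \ol{\mcT}$ implies $\ol{\mcS}\cin \ol{\mcT}$ (from Subsection~\ref{sub:lifts}), iterating around the loop yields $\ol{\IsoHorn} = \ol{\Wide} = \ol{\mcA}$.

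From here the equivalence is immediate. For any class $\mcB$ we have $\mcB^{\lifts} = \ol{\mcB}^{\ \lifts}$, so the three conditions ``$\mcA \lifts (X\to\ast)$'', ``$\Wide \lifts (X\to\ast)$'', and ``$\IsoHorn \lifts (X\to\ast)$'' coincide. Combined with the characterization (1) $\iff$ $\mcA \lifts (X\to\ast)$ from Corollary~\ref{cor:minimalMSexists}, this gives (1) $\iff$ (2) $\iff$ (3).

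There is no real obstacle at this stage, since every arrow of the loop has already been proved; the theorem is a formal consequence of assembling them. The only thing to be careful about is making sure the loop is genuinely closed, i.e., that no auxiliary class was introduced whose equivalence to the others was not verified. Walking around the diagram shows this is so, and the proof is essentially a one-line citation of the six containments above.
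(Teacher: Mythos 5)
Your proposal is correct and follows exactly the paper's argument: the paper assembles the same loop of containments $(a)$--$(f)$, concludes that $\IsoHorn$, $\Wide$, and $\mcA$ generate the same saturated class, and then deduces the theorem as an immediate corollary of Corollary~\ref{cor:minimalMSexists} via $\mcB^{\lifts}=\ol{\mcB}^{\ \lifts}$. No differences worth noting.
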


We conclude with a remark considering how our work might be extended to presheaf categories on categories other than $\Delta$, such as the tree category $\Omega$ and Joyal's cell category $\Theta$.

\begin{rmk}\label{rmk:othercategories}
Given a small category $A$, Cisinski's theory implies that $\Set^{A^{\op}}$, the category of presheaves on $A$, has a minimal model structure. In fact, given any trivially fibrant presheaf $L$ with two distinct maps from the terminal object $0,1\colon \ast \to L$, one can trace through a similar argument as in Section \ref{sec:knownchar} to get a characterization of the fibrant objects of the minimal model structure on $\Set^{A^{\op}}$ in terms of pushout-products involving the maps $0,1\colon \ast \to L$. The difficulty in extending our results for $\sSet$ in $\Set^{A^{\op}}$ lies in finding an $L$ which is nice enough to replicate the methods in Sections \ref{sec:wide} and \ref{sec:isohornchar}. In particular, it should allow for a good definition of widenings and partial projection maps. If one can axiomatize the essential properties of $J$ which underpin our methods, then our proof could be replicated for any presheaf category with a presheaf $L$ satisfying those axioms. The remaining question would then be whether such a nice presheaf $L$ necessarily exists for every small category $A$, or if there are reasonable sufficient conditions on $A$ for such an $L$ to exist which are satisfied by important categories like $\Omega$ and $\Theta$.
\end{rmk}

\bibliographystyle{plain}
\bibliography{Minimal}

\end{document}